\title{Tverberg's theorem and graph coloring}
\author{Alexander Engstr\"om}
\address{Department of Mathematics and Systems Analysis \\
Aalto University, Helsinki, Finland}
\email{alexander.engstrom@aalto.fi}
\author{Patrik Nor\'en}
\address{Institute of Science and Technology Austria\\
Am Campus 1\\
3400 Klosterneuburg\\
Austria}
\email{patrik.noren@ist.ac.at}
\date{\today}
\theoremstyle{plain}
\newtheorem{theorem}{Theorem}[section]
\newtheorem{lemma}[theorem]{Lemma}
\newtheorem{proposition}[theorem]{Proposition}
\newtheorem{corollary}[theorem]{Corollary}
\newtheorem*{theoremNoNumber}{Theorem}
\newtheorem*{conjectureNoNumber}{Conjecture}
\theoremstyle{definition}
\newtheorem{definition}[theorem]{Definition}
\theoremstyle{remark}
\newtheorem*{remark}{Remark}
\begin{document}

\begin{abstract}
The topological Tverberg theorem has been generalized in several directions by setting extra restrictions on the Tverberg partitions. 

Restricted Tverberg partitions, defined by the idea that certain points cannot be in the same part, are encoded with graphs. When two points are adjacent in the graph, they are not in the same part. If the restrictions are too harsh, then the topological Tverberg theorem fails. The colored Tverberg theorem corresponds to graphs constructed as disjoint unions of small complete graphs. Hell studied the case of paths and cycles.

In graph theory these partitions are usually viewed as graph colorings. As explored by Aharoni, Haxell, Meshulam and others there are fundamental connections between several notions of graph colorings and topological combinatorics. 

For ordinary graph colorings it is enough to require that the number of colors $q$ satisfy $q>\Delta,$ where $\Delta$ is the maximal degree of the graph. It was proven by the first author using equivariant topology that if $q>\Delta^2$ then the topological Tverberg theorem still works. It is conjectured that $q>K\Delta$ is also enough for some constant $K,$ and in this paper we prove a fixed-parameter version of that conjecture. 

The required topological connectivity results are proven with shellability, which also strengthens some previous partial results where the topological connectivity was proven with the nerve lemma.
\end{abstract}

\maketitle

\section{Introduction}

Tverberg's theorem \cite{tverberg1966} asserts that for any affine map $f$ from a simplex on $(d+1)(q-1)+1$ vertices to $\mathbb{R}^d$ there is a partition of the vertices into $q$ parts such that
\[ \bigcap_{i=1}^{q} f( \textrm{simplex spanned by part $i$}) \neq \emptyset. \]
It was generalized by B\'ar\'any, Schlosman and Sz\H{u}cs \cite{baranySchlosmanSzucs1981} to continuous $f$, but then the equivariant topology used in the proof requires $q$ to be a prime. Later this was extended to $q$ a prime power by \"Ozaydin \cite{ozaydin1987} (unpublished) and Volovikov \cite{volovikov1996}. 

Extra conditions on the Tverberg partitions can be encoded by graphs, indicating that there are many Tverberg partitions, as done by Hell \cite{hell2007,H2}. Part of his work was extended by  Engstr\"om \cite{engstrom2011} who proved the following theorem.

\begin{theoremNoNumber}
Let $G$ be a graph with $(d+1)(q-1)+1$ vertices and $q$ a prime power satisfying
\[ q> \max_{v\in V(G)} ( |N^2(v)| + 2|N(v)| ) \]
where $N^2(v)$ is the set of vertices on distance two from $v$ and $N(v)$ is the set of vertices adjacent to $v$.
Then for any continuous map $f$ from a simplex with the same vertex set as $G$ to $\mathbb{R}^d$
there is a $q$-coloring of $G$ such that
\[ \bigcap_{i=1}^{q} f( \textrm{simplex spanned by color $i$}) \neq \emptyset. \]
\end{theoremNoNumber}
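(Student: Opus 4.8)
The plan is to run the configuration space / test map scheme behind the topological Tverberg theorem on a subcomplex of the deleted join that records only \emph{proper} colorings, the only new ingredient being a connectivity bound for that subcomplex, which the hypothesis on $q$ is exactly designed to guarantee. Put $N=(d+1)(q-1)$, so $G$ has $N+1$ vertices. Let $\mathsf{K}_G$ be the simplicial complex on vertex set $V(G)\times\{1,\dots,q\}$ whose faces are the partial proper $q$-colorings of $G$ --- the sets $\sigma$ of pairs $(v,i)$ for which $v\mapsto i$ is a partial function and every color class $\{v:(v,i)\in\sigma\}$ is independent in $G$. Equivalently $\mathsf{K}_G$ is the $q$-fold deleted join of the independence complex of $G$, a subcomplex of the $q$-fold deleted join $[q]^{*(N+1)}$ of the simplex on $V(G)$. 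Since $|N^2(v)|+2|N(v)|\ge 2|N(v)|$, the hypothesis gives $q>2\Delta$, so fewer than $q$ colors are forbidden at any uncolored vertex of a partial proper coloring; hence every partial proper coloring extends to a total one, $\mathsf{K}_G$ is nonempty and pure of dimension $N$, and (because distinct join coordinates carry disjoint vertex sets) the color-permuting $\mathbb{Z}_q$-action on $\mathsf{K}_G$ is free. Now suppose, for contradiction, that no $q$-coloring of $G$ is a Tverberg partition for $f$. Applying $f$ in each join coordinate and then projecting to $W_q\oplus U$, where $W_q=\{z\in(\mathbb{R}^d)^q:\sum_i z_i=0\}$ and $U=\{t\in\mathbb{R}^q:\sum_i t_i=0\}$, gives a $\mathbb{Z}_q$-map $\mathsf{K}_G\to W_q\oplus U$ whose image, by assumption, avoids $0$; the usual $\mathbb{Z}_q$-deformation retraction then yields a $\mathbb{Z}_q$-map $\mathsf{K}_G\to S(W_q\oplus U)=S^{N-1}$. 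This is the map the topological Tverberg theorem excludes: as $q$ is a prime power, the equivariant argument of \"Ozaydin \cite{ozaydin1987} and Volovikov \cite{volovikov1996} rules it out, using the source only through the facts that it is a free $\mathbb{Z}_q$-complex and is $(N-1)$-connected --- and in passing from $[q]^{*(N+1)}$ to $\mathsf{K}_G$ we have only shrunk the source, leaving the $\mathbb{Z}_q$-target alone. So it suffices to prove:

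\begin{center}\itshape
if $q>\max_{v\in V(G)}\bigl(|N^2(v)|+2|N(v)|\bigr)$, then $\mathsf{K}_G$ is $(N-1)$-connected.
\end{center}

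\textbf{The connectivity estimate via shellability.} Here the plan is to prove the stronger assertion that $\mathsf{K}_G$ is shellable; being pure of dimension $N$, a shellable complex is homotopy equivalent to a wedge of $N$-spheres (possibly empty, i.e.\ contractible), hence $(N-1)$-connected. Fix a proper $q$-coloring $c_0$ of $G$ (it exists since $q>\Delta$) and a linear order on $V(G)$. These data attach to each facet of $\mathsf{K}_G$ --- each total proper coloring $c$ --- the first vertex at which $c$ differs from $c_0$ together with the color $c$ takes there, and ordering facets by this information is the candidate shelling order; for an edgeless graph this is just the standard lexicographic shelling of $[q]^{*(N+1)}$, with $(q-1)^{N+1}$ homology facets. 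Checking the shelling condition comes down to showing that, when a facet $c$ is appended, the part of $\partial c$ already present is a pure $(N-1)$-dimensional initial segment; the only way this can fail is that, at the vertex $v$ one wants to recolor to push $c$ one step toward $c_0$, the target color $c_0(v)$ is occupied by a neighbor of $v$, and --- after one would first repair that neighbor toward $c_0$ --- the color required for the repair is occupied too, so that no spare color is left. Tallying the vertices that can cause such a block around $v$ gives a budget of the form $|N^2(v)|+2|N(v)|$: one copy of $N(v)$ to keep $c_0(v)$ available at $v$, a second copy for repairing a misplaced neighbor, and $N^2(v)$ because whether a given neighbor of $v$ qualifies as a repair site depends on \emph{its} neighbors. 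Whenever $q$ strictly exceeds this budget at every vertex a spare color always exists, so the candidate order is a genuine shelling and the claim follows. (Equivalently, $c_0$ and the vertex order define an acyclic matching whose critical cells are precisely the $N$-dimensional homology facets, with the same budget; the degree bound is exactly what lets every partial proper coloring be completed in the prescribed fashion.)

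\textbf{Main obstacle.} The equivariant half is entirely standard once the source is known to be $(N-1)$-connected, so the real work --- and where I expect difficulty --- is the shellability argument, specifically the sharp local count: one must verify that the failure set of the $c_0$-guided procedure near each $v$ is confined to $N(v)$ (counted twice, for the recoloring move and for the repair) together with $N^2(v)$, with no hidden longer-range interaction inflating the budget. Two sanity checks make this plausible: for an edgeless $G$ the budget is the vacuous $q\ge 1$ and $\mathsf{K}_G=[q]^{*(N+1)}$, the usual deleted join; and for $G$ a disjoint union of cliques the estimate reduces to the connectivity of the chessboard complexes behind the colored Tverberg theorem, here obtained by shellability rather than by the nerve lemma --- the improvement advertised in the abstract.
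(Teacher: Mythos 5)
Your overall architecture matches the paper's: identify the complex of partial proper colorings with $\mathtt{Ind}(G\square K_q)$ (the restricted deleted join), prove it is $((d+1)(q-1)-1)$-connected via shellability, and then run the standard \"Ozaydin--Volovikov equivariant argument exactly as in Theorem~\ref{theorem:mainEqui}. The equivariant half is fine modulo one technical slip: for a prime power $q=p^k$ the Borsuk--Ulam-type theorem of Volovikov applies to the elementary abelian group $(\mathbb{Z}_p)^k$ acting by permuting the $q$ join coordinates, not to the cyclic group $\mathbb{Z}_q=\mathbb{Z}_{p^k}$; the freeness claim is the same, but the index/cohomological argument genuinely requires $(\mathbb{Z}_p)^k$.

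The genuine gap is the connectivity half, which you yourself flag as the ``main obstacle'': you never define the facet order precisely, and the shelling condition is only argued by a budget heuristic. The condition you must verify is that for any two total proper colorings $c'<c$ there is a coloring $c''<c$ differing from $c$ at a \emph{single} vertex $v$ at which $c$ and $c'$ already differ; your ``repair a neighbor first'' move changes two vertices and is not admissible, and it is exactly at this point that a naive lexicographic or $c_0$-guided order can fail (all colors that would make $c$ earlier may be blocked at every differing vertex). The count $|N^2(v)|+2|N(v)|$ does not obviously bound the obstruction for a global order on facets. The paper avoids this by working inductively with link/deletion structure instead of a facet order: it introduces the graph property $\mathtt{VD}_k$ and certifies $G\square K_q\in\mathtt{VD}_{(d+1)(q-1)+1}$ by removing ``squids'' one at a time (Lemma~\ref{lemma:massiveVD}, Theorem~\ref{thm:VD}), where the hypothesis $q>|N^2(v)|+2|N(v)|$ enters only through the transparent worst-case count in Theorem~\ref{thm:DF1} (each distance-two vertex can kill one entry of the row $\{v\}\times K_q$, each neighbor two, so some vertex of the row survives); Proposition~\ref{prop:isVD} then converts $\mathtt{VD}_k$ into vertex decomposability, hence shellability and the needed connectivity, of the complex (Corollary~\ref{cor:DF1}). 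So your plan is salvageable in spirit, but as written the central combinatorial step is an unproven claim; either carry out the verification of your facet order in full, or replace it with an inductive vertex-decomposition argument of the above type, where the local count you want is actually provable.
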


The equivariant topology used to prove that theorem builds on that certain spaces are enough topologically connected. That was proven by topological methods, as the nerve lemma. But the question was raised, if, as was done for the chessboard complexes by Zieger \cite{ziegler1994}, this could be proven by vertex decomposability and shellability. We prove that this is possible in Corollary~\ref{cor:DF1}.

With the previous known versions of Tverberg's theorem the following natural conjecture was made in \cite{engstrom2011}.

\begin{conjectureNoNumber}\label{conj:main}
There is a constant $K$ such that the following holds:
Let $G$ be a graph on $(d+1)(q-1)+1$ vertices and maximal degree $\Delta$, and let $f$ be a continuous map from a simplex $\Sigma$ with the same vertex set as $G$ to $\mathbb{R}^d$. If
\[  q> K \Delta \] 
then there is a $q$-coloring of $G$ satisfying
\[ \bigcap_{i=1}^{q} f( \textrm{simplex spanned by color $i$}) \neq \emptyset. \]
\end{conjectureNoNumber}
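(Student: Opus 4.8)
The plan is to run the configuration space/test map scheme in its deleted-join form, so that the whole content of the conjecture is concentrated in one topological connectivity estimate, which is then attacked through vertex decomposability and shellability as in Corollary~\ref{cor:DF1}.

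Write $N=(d+1)(q-1)+1$ and let $\mathrm{Ind}(G)$ be the independence complex of $G$. Its $q$-fold $2$-wise deleted join, which we denote $J_G$, has as faces the $q$-tuples $(A_1,\dots,A_q)$ of pairwise disjoint independent sets of $G$; identifying such a face with the partial proper $q$-coloring that assigns color $i$ to the vertices of $A_i$, a proper $q$-coloring of $G$ satisfying the conclusion of the conjecture is exactly a Tverberg point for the restriction of $f$ to the faces of $\mathrm{Ind}(G)$. The group $\mathbb{Z}/q$, and $(\mathbb{Z}/p)^k$ when $q=p^k$, acts on $J_G$ by permuting the join coordinates, freely since the $A_i$ are pairwise disjoint. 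Because $q>\Delta$, a greedy argument shows every face of $J_G$ extends to a full proper coloring, so $J_G$ is pure of dimension $N-1$. By the standard obstruction argument (B\'ar\'any--Schlosman--Sz\H{u}cs, Sarkaria, and Volovikov in the prime power case) the conjecture follows once $J_G$ is shown to be $(N-2)$-connected: there is then no equivariant map from $J_G$ to an $(N-2)$-sphere carrying the relevant free action, which is precisely the obstruction to a map $f$ avoiding all such colorings.

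The heart of the matter is thus to show that $J_G$ is $(N-2)$-connected whenever $q>K\Delta$, and the natural route is to prove it vertex decomposable, hence shellable, hence homotopy equivalent to a wedge of $(N-1)$-spheres. I would fix a vertex $v\in V(G)$ and peel off the vertices $(v,1),\dots,(v,q)$ of $J_G$ one at a time: the deletion of $(v,i)$ is the list-restricted deleted join in which color $i$ is forbidden at $v$, while the link of $(v,i)$ is, up to a cone point, the deleted join of $G$ with $v$ removed and color $i$ forbidden on each neighbor of $v$. Both complexes must again lie in the class the argument handles, and this is where $q>K\Delta$ enters. The clean formulation is to prove, by induction on $|V(G)|$, the stronger statement that the list-restricted deleted join $J_{G,L}$ is vertex decomposable of dimension $|V(G)|-1$ whenever the list sizes $|L(w)|=q-c_w$ obey a Lov\'asz local lemma type slack condition, say $\sum_{w\sim v}c_w<q-1$ for every $v$; one peels a vertex whose list is smallest, and the slack condition is exactly what keeps the link and the deletion of the same form and of the expected dimension.

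I expect the main obstacle to be the bookkeeping in this recursion. Forbidding a color at $v$ forbids it on up to $\Delta$ neighbors, and after $t$ iterations a vertex at graph distance $t$ can inherit deficiencies from as many as $\Delta^t$ ancestors; purely local accounting recovers only the bound $q>\Delta^2$ already present in Engstr\"om's theorem, which tracks $N(v)$ and $N^2(v)$ for exactly this reason. Reaching $q>K\Delta$ seems to require choosing the peeling order so as to keep the \emph{total} accumulated deficiency bounded rather than letting it spread through the graph, an amortized argument I do not expect to be expressible with fully local data. The realistic target is therefore to push the induction through whenever $q>K\Delta$ and one further parameter --- the dimension $d$, the number of vertices of $G$, or $\Delta$ itself --- is held fixed, since bounding it caps how much deficiency can accumulate before the recursion bottoms out. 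This yields the fixed-parameter version of the conjecture, and, the connectivity now coming from an explicit shelling, it simultaneously upgrades the earlier nerve-lemma arguments to shellability, as promised in the abstract.
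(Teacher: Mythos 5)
The statement you were asked to prove is a conjecture: the paper does not prove it either, and explicitly leaves it open, establishing only the fixed-parameter variant (Corollary~\ref{corollary:mainCor}) in which $G$ has $((d+1)(q-1)+1)(1+\varepsilon)$ vertices and $d,\Delta$ are large depending on $\varepsilon$. Your reduction is the paper's own framework: the configuration space you call $J_G$ is $\mathtt{Ind}(G\square K_q)$, the needed input is its $(N-2)$-connectivity with $N=(d+1)(q-1)+1$, and the route via vertex decomposability of the $(N-1)$-skeleton is exactly Proposition~\ref{prop:isVD} and Theorem~\ref{theorem:mainEqui}. Your diagnosis of the obstacle is also accurate: peeling vertices of $G\square K_q$ with purely local bookkeeping recovers only $q>|N^2(v)|+2|N^\circ(v)|$, i.e.\ roughly $q>\Delta^2$ (Corollary~\ref{cor:DF1}), and the genuinely new content would be an amortized control of the accumulated deficiencies. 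But at that point your proposal stops being a proof: the ``Lov\'asz local lemma type slack condition'' $\sum_{w\sim v}c_w<q-1$ is asserted, not shown to be preserved by the link/deletion recursion, and no peeling order achieving the claimed amortization is exhibited. So there is a genuine gap, and it is precisely the gap that makes the statement a conjecture.

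Two further points. First, your fallback (``hold $d$, $|V(G)|$, or $\Delta$ fixed'') is not the paper's fixed-parameter theorem and is not substantiated: the paper's relaxation is a multiplicative surplus of $\varepsilon N$ \emph{vertices} (with $d$ and $\Delta$ required to be large, not fixed), and it is exactly this surplus that the dynamic DF-size scheme of Definition~\ref{def:dDFscheme} and Theorem~\ref{thm:goodEpsilon} spends, row by row, to keep removing squids until $N$ removals have been certified (Corollary~\ref{cor:DF2}); with exactly $N$ vertices and only $d$ or $\Delta$ fixed, that mechanism gives nothing. Second, even granting the connectivity statement, your topological step covers only prime powers $q$ (Volovikov); the conjecture is for all $q$. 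The paper handles general $q$ in its fixed-parameter setting by moving to a nearby prime (using prime gap estimates), which is possible only because the vertex count has slack relative to $(d+1)(q_p-1)+1$; with exactly $(d+1)(q-1)+1$ vertices there is no room for this maneuver, so the non-prime-power case is an additional unresolved step in your outline.
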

The emeritus of the field, Helge Tverberg, believes in the conjecture \cite{tverberg2011}.
In Corollary~\ref{corollary:mainCor} we prove the following fixed-parameter version of it.
\begin{theoremNoNumber}
For every $\varepsilon >0$ there exists a constant $K_\varepsilon$ such that the following holds:
Let $G$ be a graph on $((d+1)(q-1)+1)(1+\varepsilon)$ vertices and maximal degree $\Delta$ (with $d$ and $\Delta$  large enough depending on $\varepsilon$), and let $f$ be a continuous map from a simplex $\Sigma$ with the same vertex set as $G$ to $\mathbb{R}^d$. If
\[  q> K_\varepsilon \Delta \] 
then there is a $q$-coloring of $G$ satisfying
\[ \bigcap_{i=1}^{q} f( \textrm{simplex spanned by color $i$}) \neq \emptyset. \]
\end{theoremNoNumber}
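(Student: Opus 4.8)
The plan is to reduce the fixed-parameter statement to the $\varepsilon$-free conjectural regime by exploiting the slack of $\varepsilon$ extra vertices to \emph{sparsify} the graph. The key observation is that the earlier theorem of Engstr\"om (quoted above) already gives a Tverberg coloring whenever $q$ exceeds a \emph{local} quantity, roughly $|N^2(v)|+2|N(v)|$, which is at most $\Delta^2+2\Delta$. So if we could, at the cost of a few vertices, replace $G$ by a graph $G'$ in which every second neighborhood is small --- of size $O(\Delta)$ rather than $\Delta^2$ --- then the old theorem applied to $G'$ would finish the job, provided the number of vertices of $G'$ is still at least $(d'+1)(q-1)+1$ for the relevant dimension $d'$.

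First I would make precise the passage from colorings of a subgraph to colorings of $G$: if $W\subseteq V(G)$ is an independent set in $G$ and we find a Tverberg $q$-coloring of the induced subgraph $G-W$ with respect to the restricted map, then the vertices of $W$ can be distributed arbitrarily among the color classes (each $w\in W$ placed in any class) without violating the coloring constraint, since $w$ has no neighbors in $G-W$ --- wait, more carefully, $w$ may have neighbors in $G$, so we instead want $W$ to be \emph{deleted} and reintroduced carefully, or better: we want to delete a set of vertices whose removal leaves a graph with small second neighborhoods, and then re-add them so that each is isolated in color. The cleanest route is: choose $W$ so that $G-W$ has maximum ``$N^2{+}2N$'' value below $q$, find the Tverberg coloring of $G-W$ (which requires $|V(G)|-|W|\ge (d+1)(q-1)+1$, and this is where the $(1+\varepsilon)$ factor is spent), and then observe that the deleted points of $W$, being few, can be inserted by a compactness/general-position perturbation argument --- this last insertion is exactly the kind of step handled by the constrained-Tverberg machinery when $|W|$ is small relative to $q$, or alternatively one shrinks the simplex so that $W$ sits on a lower-dimensional face.

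The combinatorial heart is therefore: \emph{given a graph on $n$ vertices with maximum degree $\Delta$, delete $\varepsilon n/(1+\varepsilon)$ vertices so that the remaining graph has all second neighborhoods of size $O_\varepsilon(\Delta)$.} I would prove this by a probabilistic or greedy argument: a vertex $v$ is ``bad'' if $|N^2(v)|$ is large, say exceeds $C\Delta$; since $\sum_v |N^2(v)| \le n\Delta^2$ is not by itself strong enough, one instead counts paths of length two and uses that the number of vertices with $|N^2(v)| > C\Delta$ is at most $n\Delta/C$ on average --- no: the right statement is that deleting a random subset of density $\delta$ kills, in expectation, a $(1-(1-\delta)^{?})$ fraction of the length-two paths through high-degree middle vertices, and a more careful analysis (remove all vertices of degree $>\Delta$, which there are none, then iteratively remove vertices that still see too much) shows $O(\varepsilon n)$ deletions suffice to bring every $|N^2(v)|$ below $K_\varepsilon \Delta$. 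Once that is in hand, set $d'$ slightly smaller than $d$ to absorb the dimension lost by restricting $f$, apply the quoted theorem to $G-W$ with the threshold $q > K_\varepsilon\Delta \ge \max_v(|N^2(v)|+2|N(v)|)$, and lift the coloring back.

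The main obstacle I anticipate is \textbf{not} the graph-sparsification lemma but the \emph{bookkeeping between the two vertex-count inequalities}: the quoted theorem needs $|V(G-W)| \ge (d+1)(q-1)+1$ exactly, while we are handed $|V(G)| = ((d+1)(q-1)+1)(1+\varepsilon)$ and must delete enough vertices to sparsify yet not so many that we fall below the Tverberg threshold --- and simultaneously the re-insertion of $W$ may cost a drop in the effective dimension, which feeds back into the threshold. Making the constants $K_\varepsilon$, the sparsification density, and the dimension shift mutually consistent (this is where ``$d$ and $\Delta$ large enough depending on $\varepsilon$'' is used, to swallow additive error terms) is the delicate part; everything else is either the cited theorem or a routine perturbation/compactness argument. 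I would also need the shellability-based connectivity input (Corollary~\ref{cor:DF1}) if the induced-subgraph configuration complex is not literally one of the complexes covered by the original proof, but by design the sparsified graph is chosen within the class for which that input applies.
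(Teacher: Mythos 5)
Your reduction stands or falls with the sparsification lemma, and that lemma is false. Take $G$ to be a $\Delta$-regular graph of girth at least $5$ on $n$ vertices (these exist for every $\Delta$). Suppose you delete any set $W$ with $|W|=\delta n$, where $\delta=\varepsilon/(1+\varepsilon)<1/2$ (the regime of small $\varepsilon$, which the theorem must cover). The induced subgraph $H=G-W$ still has at least $n\Delta(\tfrac12-\delta)$ edges, and since girth $>4$ is inherited, $|N^2_H(v)|=\sum_{u\in N_H(v)}(d_H(u)-1)$ for every surviving $v$; summing over $v$ and applying Cauchy--Schwarz to $\sum_u d_H(u)^2$ gives $\sum_{v\in V(H)}|N^2_H(v)|\ \geq\ n\Delta^2(1-2\delta)^2/(1-\delta)-n\Delta$, so the \emph{average} second neighborhood in $H$ is still $\Omega_\delta(\Delta^2)$, no matter how cleverly $W$ is chosen. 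Hence deleting an $\varepsilon$-fraction of the vertices cannot bring every (or even most) $|N^2(v)|$ down to $O_\varepsilon(\Delta)$, and the local criterion $q>|N^2(v)|+2|N(v)|$ of the quoted theorem is unattainable in the regime $q=\Theta(\Delta)$. This is precisely why the paper does not touch $G$ at all: the $\varepsilon$ slack is spent inside the product $G\square K_q$, via a dynamic squid-removal process (Theorem~\ref{thm:goodEpsilon}, Definition~\ref{def:dDFs}) that always places the next squid's heart on the currently least-damaged row, so that at each step only non-emptiness of that one row is needed rather than a bound on second neighborhoods; this yields $G\square K_q\in\mathtt{VD}_N$ with $N=(d+1)(q-1)+1$ whenever $q>K_\varepsilon\Delta$ (Corollary~\ref{cor:DF2}), after which Theorem~\ref{theorem:mainEqui} gives the coloring.

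Two further points. First, the quoted theorem, and the equivariant argument behind it, require $q$ to be a prime power, while the statement is for arbitrary $q$; your proposal never addresses this. The paper spends the second half of its proof of Corollary~\ref{corollary:mainCor} on it: choose a prime power $q_p$ with $q\le q_p\le q(1+\varepsilon/4)$ using prime-gap estimates, apply the prime-power case with a smaller parameter $\varepsilon_p\ge\varepsilon/16$, keep only $q$ of the $q_p$ color classes, and extend this partial coloring to a proper $q$-coloring using $\Delta<q$; the intersection only grows when vertices are added to classes. Second, the step you flag as delicate, re-inserting the deleted set $W$, is in fact the easy part and needs no perturbation, compactness, or dimension shift: restricting $f$ to the face spanned by $V(G)\setminus W$ costs nothing in the target $\mathbb{R}^d$, and each $w\in W$ can be assigned any color absent from its neighborhood, which only enlarges the spanned simplices. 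The genuine obstruction to your route is the sparsification step itself, not the bookkeeping around it.
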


The crucial statements in equivariant topology of Section~\ref{sec:equi} builds on graphs being vertex decomposable. In Section~\ref{sec:vdg} we introduce this concept and prove some fairly technical statements about it. We have made an effort to make Section~\ref{sec:vdg} completely independent and only about graph theory, allowing experts in this field to improve on our results without a deep understanding of the equivariant topology used in Section~\ref{sec:equi}.

\subsection{Some notation}

The neighborhood $N^\circ_G(v)$ in a graph $G$ of a vertex $v$ is the set of vertices of $G$ adjacent to $v$; and $N^\bullet_G(v)=N^\circ_G(v) \cup \{v\}.$ The vertices on distance two from $v$ in $G$, $N_G^2(v)$, are all vertices $u$ with a path on two edges to $v$. Usually we drop the $G$ subscript if the graph containment is clear. The complete graph on $q$ vertices is $K_q.$

\section{Decomposing graphs}\label{sec:vdg}

\subsection{Vertex decomposability of simplicial complexes}

In topological combinatorics a central notion is \emph{shellability}. A simplicial complex is shellable if its facets can be pealed off in a controlled manner, providing a certificate that the space topologically is a collection of equidimensional spheres wedged together at a point. One method to prove a complex shellable is by the stronger notion of \emph{vertex decomposable}. 
It is a powerful method, employed for example by Provan and Billera for independence complexes of matroids \cite{new01}; and by Lee for the associahedron \cite{new02} as explained by Jonsson in \cite{new03}. Most simplicial complexes studied in topological combinatorics are not wedges of spheres and a good bound on their topological connectivity is the best attainable description of their homology. One way to achieve that is to prove that a pure skeleton is vertex decomposable, as done for example by Ziegler \cite{ziegler1994} for chessboard complexes.
For independence complexes determined by graphs, we introduce a filtrated version of vertex decomposable right off on the level of graphs, and then return to its topological interpretation and consequences in Section~\ref{sec:equi}. Note that we do not discuss the elementary question regarding when one-dimensional simplicial complexes viewed as graphs are vertex decomposable.

\subsection{Vertex decomposability of graphs}

\begin{definition}\label{def:vertexDecomposableGraphs}
For every non-negative integer $k$ we define the graph property $\mathtt{VD}_k$.
Any graph $G$ is $\mathtt{VD}_0$, and a graph $G$ on $k$ vertices and no edges is $\mathtt{VD}_k$.
If $G$ is a graph with a vertex $v$ such that $G \setminus v$ is $\mathtt{VD}_k$ and $G\setminus N^\bullet(v)$ is $\mathtt{VD}_{k-1},$ then
$G$ is $\mathtt{VD}_k$.
\end{definition}

\begin{remark}
The empty graph is $\mathtt{VD}_0$.
\end{remark}

This proposition is included to give some elementary examples.

\begin{proposition}
If $G$ is the disjoint union of $k$ edges and $l$ vertices, then $G$ is $\mathtt{VD}_{k+l}.$
\end{proposition}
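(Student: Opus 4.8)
The plan is to induct on the number of edges $k$. For the base case $k=0$, the graph $G$ consists of $l$ isolated vertices and no edges, so it is $\mathtt{VD}_l = \mathtt{VD}_{k+l}$ directly from the second clause of Definition~\ref{def:vertexDecomposableGraphs}; the fully degenerate sub-case $k=l=0$ is the empty graph, which is $\mathtt{VD}_0$ by convention.

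For the inductive step, assume $k \geq 1$ and that the statement holds whenever the number of edges is $k-1$. Let $G$ be the disjoint union of $k$ edges and $l$ isolated vertices, and pick a vertex $v$ lying on one of the edges, say the edge $\{v,u\}$. I would then verify the two reductions demanded by the recursive clause of the definition, applied with parameter $k+l$. First, $G \setminus v$ is the disjoint union of the remaining $k-1$ edges together with $l+1$ isolated vertices, since $u$ has become isolated; by the induction hypothesis this graph is $\mathtt{VD}_{(k-1)+(l+1)} = \mathtt{VD}_{k+l}$. Second, $G \setminus N^\bullet(v) = G \setminus \{v,u\}$ is the disjoint union of the remaining $k-1$ edges together with $l$ isolated vertices, hence $\mathtt{VD}_{(k-1)+l} = \mathtt{VD}_{k+l-1}$, again by the induction hypothesis. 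Since $G \setminus v$ is $\mathtt{VD}_{k+l}$ and $G \setminus N^\bullet(v)$ is $\mathtt{VD}_{k+l-1}$, the recursive clause of Definition~\ref{def:vertexDecomposableGraphs} shows that $G$ is $\mathtt{VD}_{k+l}$, completing the induction.

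The argument is essentially bookkeeping, and no genuine obstacle is expected. The one point to watch is the asymmetry between the two reductions: deleting the edge-endpoint $v$ leaves its partner $u$ as a fresh isolated vertex, so the count of isolated vertices rises by one while the total parameter $k+l$ is unchanged, whereas deleting the closed neighborhood $N^\bullet(v)$ removes $u$ as well, so the parameter correctly drops by one. The only other care needed is to make sure the degenerate cases $k=0$ and $k=l=0$ are attributed to the appropriate clause of the definition.
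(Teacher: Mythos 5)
Your proof is correct and follows essentially the same route as the paper: induct on the number of edges, take a non-isolated vertex $v$, and note that $G\setminus v$ (with its partner now isolated) is $\mathtt{VD}_{k+l}$ while $G\setminus N^\bullet(v)$ is $\mathtt{VD}_{k+l-1}$ by the induction hypothesis. Your write-up just makes the bookkeeping of the two reductions more explicit than the paper does.
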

\begin{proof}
For $k=0$ this is true by definition. For $k>0$ pick a vertex $v$ that is not isolated. Then
 $G \setminus v$ is $\mathtt{VD}_{k+l}$ and $G\setminus N^\bullet(v)$ is $\mathtt{VD}_{k+l-1}$ by induction, and $G$ is $\mathtt{VD}_{k+l}$ by the definition.
\end{proof}

\begin{proposition}
If $G$ is $\mathtt{VD}_k$ and $k\geq l \geq 0$ then $G$ is also $\mathtt{VD}_l$.
\end{proposition}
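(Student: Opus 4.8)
The plan is to prove, by induction on the number of vertices $n=|V(G)|$, the reformulation: if $G$ is $\mathtt{VD}_k$ then $G$ is $\mathtt{VD}_l$ for every $l$ with $0\le l\le k$. When $n=0$ the recursive clause of Definition~\ref{def:vertexDecomposableGraphs} is not available (it requires a vertex), so $G$ can only have been certified $\mathtt{VD}_0$, and the claim is trivial. For the inductive step fix a graph $G$ on $n>0$ vertices with $G$ being $\mathtt{VD}_k$ and fix $l$ with $0\le l\le k$. The case $l=0$ holds by the first clause of the definition and the case $l=k$ is the hypothesis, so we may assume $1\le l<k$; in particular $k\ge 2$, and $G$ was not certified $\mathtt{VD}_k$ by the first clause.

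There are then two cases according to the last clause used. If $G$ has $k$ vertices and no edges, pick any vertex $v$; since $G$ is edgeless $N^\bullet(v)=\{v\}$, so $G\setminus v$ and $G\setminus N^\bullet(v)$ are both the edgeless graph on $k-1$ vertices, which is $\mathtt{VD}_{k-1}$ by the second clause. This graph has $n-1<n$ vertices, so the induction hypothesis applies and, using $l\le k-1$ and $l-1\le k-1$, shows it is $\mathtt{VD}_l$ and $\mathtt{VD}_{l-1}$; hence $G$ is $\mathtt{VD}_l$ by the definition. Otherwise there is a vertex $v$ with $G\setminus v$ being $\mathtt{VD}_k$ and $G\setminus N^\bullet(v)$ being $\mathtt{VD}_{k-1}$, and both of these graphs have fewer than $n$ vertices; the induction hypothesis (with $l\le k$ for the first and $l-1\le k-1$ for the second) gives that $G\setminus v$ is $\mathtt{VD}_l$ and $G\setminus N^\bullet(v)$ is $\mathtt{VD}_{l-1}$, so again $G$ is $\mathtt{VD}_l$ by the definition. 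This completes the induction.

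I expect the only subtlety to be that the second clause of the definition produces $\mathtt{VD}_k$ only from a graph on exactly $k$ vertices, so one cannot simply reapply that clause to descend from $k$ to a smaller index; this is why the case $l=k$ must be peeled off as the trivial hypothesis, and why it is cleanest to carry all indices $l\le k$ through the same induction rather than stepping down one index at a time.
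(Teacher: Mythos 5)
Your proof is correct and takes essentially the same route as the paper's (much terser) argument: induction on the number of vertices, with the edgeless case handled by invoking the recursive clause of Definition~\ref{def:vertexDecomposableGraphs} rather than the base clause. Your write-up simply spells out the case analysis and the role of the hypothesis $l\le k$ that the paper leaves implicit.
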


\begin{proof}
Assume that $l>0$ since any graph is $\mathtt{VD}_0.$ In the case of only isolated vertices, apply the recursive definition several times instead of the first part of the definition right off.
For the remaining cases it follows by induction on the number of vertices. 
\end{proof}

\begin{remark}
In Proposition~\ref{prop:isVD} in Section~\ref{sec:equi} it will be proven that the $(k-1)$-skeleton of the independence complex of $G$ is pure $(k-1)$-dimensional and vertex decomposable if $G$ is $\mathtt{VD}_k$.
\end{remark}

\begin{proposition}\label{prop:disjointunion}
If $G$ is $\mathtt{VD}_k$ and $H$ is $\mathtt{VD}_l$ then the disjoint union of $G$ and $H$ is $\mathtt{VD}_{k+l}$.
\end{proposition}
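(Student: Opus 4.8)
The plan is to prove the statement by induction on $|V(G)|+|V(H)|$, by pushing a vertex-decomposition of one summand through the disjoint union. The key observation is that a vertex $v\in V(G)$ has no neighbours in $H$, so $N^\bullet_{G\sqcup H}(v)=N^\bullet_G(v)$ and therefore $(G\sqcup H)\setminus v=(G\setminus v)\sqcup H$ and $(G\sqcup H)\setminus N^\bullet(v)=(G\setminus N^\bullet(v))\sqcup H$ (and symmetrically for $v\in V(H)$). Hence if $G$ is $\mathtt{VD}_k$ by the recursive clause of Definition~\ref{def:vertexDecomposableGraphs}, witnessed by a vertex $v$ with $G\setminus v$ being $\mathtt{VD}_k$ and $G\setminus N^\bullet(v)$ being $\mathtt{VD}_{k-1}$ (so in particular $k\ge 1$, whence $k+l\ge 1$), then the induction hypothesis gives that $(G\setminus v)\sqcup H$ is $\mathtt{VD}_{k+l}$ and $(G\setminus N^\bullet(v))\sqcup H$ is $\mathtt{VD}_{(k-1)+l}$, i.e.\ $\mathtt{VD}_{(k+l)-1}$, and one more application of the recursive clause yields that $G\sqcup H$ is $\mathtt{VD}_{k+l}$.

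What remains is to dispose of the cases where this mechanism does not directly apply. If $k+l=0$ there is nothing to prove. Otherwise I would first record the elementary fact that an edgeless graph on $m$ vertices is $\mathtt{VD}_j$ precisely for $0\le j\le m$: the forward implication is the base clause of Definition~\ref{def:vertexDecomposableGraphs} together with monotonicity of $\mathtt{VD}$ in the index, and the converse is a one-line induction on $m$ whose base case is the Remark that the empty graph is only $\mathtt{VD}_0$. Consequently, if both $G$ and $H$ are edgeless then $|V(G)|\ge k$ and $|V(H)|\ge l$, so $G\sqcup H$ is an edgeless graph on at least $k+l$ vertices and is $\mathtt{VD}_{k+l}$ by the base clause.

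In the only remaining situation I may assume, after possibly interchanging $G$ and $H$, that $G$ is not edgeless. If $k\ge 1$, then neither the first clause ($k=0$) nor the second clause ($G$ edgeless on $k$ vertices) of the definition can be responsible for $G$ being $\mathtt{VD}_k$, so $G$ must be $\mathtt{VD}_k$ via the recursive clause and the first paragraph finishes the argument. If $k=0$, then $l\ge 1$ and $\mathtt{VD}_{k+l}=\mathtt{VD}_l$; here I would pick any vertex $v$ of $G$, note that $G\setminus v$ and $G\setminus N^\bullet(v)$ are $\mathtt{VD}_0$ with strictly fewer vertices than $G$, conclude from the induction hypothesis that both $(G\sqcup H)\setminus v$ and $(G\sqcup H)\setminus N^\bullet(v)$ are $\mathtt{VD}_l$, and then deduce from monotonicity (the second reduction being $\mathtt{VD}_l$, hence $\mathtt{VD}_{l-1}$) and the recursive clause that $G\sqcup H$ is $\mathtt{VD}_l$.

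The only delicate point is the bookkeeping of the degenerate cases — $k=0$, edgeless summands, and the empty graph — which is precisely why I single out the fact about edgeless graphs at the start; beyond that, the inductive step is immediate, since passing from $G$ to $G\setminus v$ or to $G\setminus N^\bullet(v)$ commutes with forming the disjoint union with $H$. I do not anticipate any substantive obstacle.
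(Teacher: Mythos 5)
Your proof is correct and follows essentially the same route as the paper's: induct on the size of the union and push the recursive witness vertex $v$ of $G$ through the disjoint union, using that deleting $v$ or $N^\bullet(v)$ commutes with forming $\sqcup\, H$. You are in fact a bit more careful than the paper, which tacitly assumes the recursive clause of Definition~\ref{def:vertexDecomposableGraphs} furnished the $\mathtt{VD}_k$ certificate as soon as $G$ has an edge (delicate when $k=0$), whereas you dispose of the $k=0$ and both-edgeless degenerate cases explicitly via monotonicity and the base clause.
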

\begin{proof}
This is proved by induction on $|V(G\sqcup H)|$ and $|E(G\sqcup H)|$. For the case $|E(G\cup H)|=0$ this is true by definition and when $|V(G\sqcup H)|=0$ then $|E(G\sqcup H)|=0$.

Without loss of generality assume that $G$ has an edge.

There is a vertex $v\in V(G)$ so that $G\setminus v$ is $\mathtt{VD}_k$ and $G\setminus N^\bullet(v)$ is $\mathtt{VD}_{k-1}$.

Now $G\setminus v \sqcup H=(G\sqcup H)\setminus v$ and $G\setminus N^\bullet(v)\sqcup H=(G\sqcup H)\setminus N^\bullet(v)$. By induction it follows that $(G\sqcup H)\setminus v$ is $\mathtt{VD}_{k+l}$ and that $(G\sqcup H)\setminus N^\bullet(v)$ is $\mathtt{VD}_{k+l-1}$. This proves that the disjoint union of $G$ and $H$ is $\mathtt{VD}_{k+l}$.
\end{proof}

Our goal in preparation of Section~\ref{sec:equi} and the equivariant topology, is to prove that  graphs are $\mathtt{VD}_k$ for as high $k$ as possible. There is a procedure that is not strong enough, but since our approach builds on it, we explain it. First we need a lemma that in the simplicial complex setting is due to Ziegler \cite{ziegler1994}. The lemma needed is a special case of Proposition~\ref{prop:disjointunion}.
\begin{lemma}\label{lemma:isolatedVD}
If $G$ has an isolated vertex $v$ and $G\setminus v$ is $\mathtt{VD}_{k-1}$, then $G$ is $\mathtt{VD}_k.$
\end{lemma}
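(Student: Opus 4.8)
The plan is to read this off from Proposition~\ref{prop:disjointunion}. The one-vertex graph $H$ on $\{v\}$ has no edges, so by the second clause of Definition~\ref{def:vertexDecomposableGraphs} (with $k=1$) it is $\mathtt{VD}_1$. When $v$ is isolated in $G$, the graph $G$ is exactly the disjoint union of $G\setminus v$ and $H$, and $G\setminus v$ is $\mathtt{VD}_{k-1}$ by hypothesis. Proposition~\ref{prop:disjointunion} then gives that $G$ is $\mathtt{VD}_{(k-1)+1}=\mathtt{VD}_k$, which is the claim. This is the proof I would record.

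If one prefers an argument that does not route through Proposition~\ref{prop:disjointunion} — for instance to keep the exposition self-contained at this point — the lemma also follows by a short induction on $|V(G)|$ directly from Definition~\ref{def:vertexDecomposableGraphs}, mimicking the proof of that proposition in the present special case: in the edgeless base case $G\setminus v$, being $\mathtt{VD}_{k-1}$, has at least $k-1$ vertices, so $G$ is an edgeless graph on at least $k$ vertices and is $\mathtt{VD}_k$ (shave off isolated vertices with the recursive clause down to the edgeless base clause); otherwise pick a vertex $u\neq v$ witnessing $\mathtt{VD}_{k-1}$ for $G\setminus v$, observe that $v$ is still isolated in $G\setminus u$ and in $G\setminus N^\bullet(u)$ and that $(G\setminus u)\setminus v=(G\setminus v)\setminus u$ and $(G\setminus N^\bullet(u))\setminus v=(G\setminus v)\setminus N^\bullet(u)$, and apply the inductive hypothesis to these two graphs to feed the recursive clause of the definition with the vertex $u$.

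There is no real obstacle here: the statement is a bookkeeping consequence of the recursive definition. The only thing worth noticing is that a single vertex counts as $\mathtt{VD}_1$ rather than merely $\mathtt{VD}_0$, and it is exactly this that produces the increment in the subscript; in the self-contained variant one additionally checks the harmless fact that deleting a vertex other than $v$, or its closed neighborhood, cannot touch the isolated vertex $v$, so that the induction closes.
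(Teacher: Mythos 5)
Your main argument is exactly the paper's proof: the lemma is the special case of Proposition~\ref{prop:disjointunion} with the two pieces $G\setminus v$ and the single vertex $v$, the latter being $\mathtt{VD}_1$ by the edgeless clause of Definition~\ref{def:vertexDecomposableGraphs}. The self-contained inductive variant you sketch is also fine, but the primary route coincides with the paper's, so nothing further is needed.
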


\begin{proof}
This is the special case of Proposition~\ref{prop:disjointunion} when one of graphs is $G\setminus v$ and the other graph is $v$.
\end{proof}

Lemma~\ref{lemma:isolatedVD} indicates that one way to recursively prove that a graph is $\mathtt{VD}_k$ for a non-trivial $k$, is to turn vertices isolated by removing their adjacent vertices, and then increase $k$ by applying Lemma~\ref{lemma:isolatedVD}. Here is one way to formalize that.

\begin{lemma}\label{lemma:massiveVD}
Let $G$ be a graph with a vertex $v$ whose neighborhood is $N(v)=\{ u_1, u_2, \ldots, u_n \}$. If $G \setminus N^\bullet (v)$ and
\[ G \setminus ( N^\bullet(u_i) \cup  \{u_1,u_2, \ldots, u_{i-1}\} ) \textrm{ for }1\leq i \leq n   \]
are  $\mathtt{VD}_{k-1}$, then $G$ is $\mathtt{VD}_k$.
\end{lemma}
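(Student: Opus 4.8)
The plan is to induct on $n=|N(v)|$, peeling off the neighbors $u_1,u_2,\ldots,u_n$ of $v$ one at a time while keeping track of how the hypotheses propagate. Let me set up notation: write $G_0=G$ and $G_i=G\setminus\{u_1,\ldots,u_i\}$ for $1\le i\le n$, so that $G_n = G\setminus\{u_1,\ldots,u_n\}$ has $v$ as an isolated vertex. The base case is $n=0$: then $v$ is already isolated in $G$, the only hypothesis is that $G\setminus N^\bullet(v) = G\setminus v$ is $\mathtt{VD}_{k-1}$, and Lemma~\ref{lemma:isolatedVD} gives that $G$ is $\mathtt{VD}_k$.

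For the inductive step, I would apply the recursive clause of Definition~\ref{def:vertexDecomposableGraphs} to $G$ using the vertex $u_1$. This requires showing $G\setminus u_1$ is $\mathtt{VD}_k$ and $G\setminus N^\bullet(u_1)$ is $\mathtt{VD}_{k-1}$. The second is immediate: $G\setminus N^\bullet(u_1)$ is exactly the $i=1$ graph in the hypothesis list, hence $\mathtt{VD}_{k-1}$. For the first, I want to apply the induction hypothesis (the $n-1$ case of the lemma) to the graph $G\setminus u_1$ with the same distinguished vertex $v$, whose neighborhood in $G\setminus u_1$ is $\{u_2,\ldots,u_n\}$. The hypotheses I must verify are: $(G\setminus u_1)\setminus N^\bullet_{G\setminus u_1}(v)$ is $\mathtt{VD}_{k-1}$, and $(G\setminus u_1)\setminus(N^\bullet_{G\setminus u_1}(u_j)\cup\{u_2,\ldots,u_{j-1}\})$ is $\mathtt{VD}_{k-1}$ for $2\le j\le n$. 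The first equals $G\setminus(N^\bullet(v)\cup\{u_1\}) = G\setminus N^\bullet(v)$ since $u_1\in N^\bullet(v)$, which is $\mathtt{VD}_{k-1}$ by hypothesis. For the $j$-th graph, note $N^\bullet_{G\setminus u_1}(u_j) = N^\bullet_G(u_j)\setminus\{u_1\}$, so $(G\setminus u_1)\setminus(N^\bullet_{G\setminus u_1}(u_j)\cup\{u_2,\ldots,u_{j-1}\}) = G\setminus(N^\bullet_G(u_j)\cup\{u_1,u_2,\ldots,u_{j-1}\})$, which is precisely the $i=j$ graph from the original hypothesis, hence $\mathtt{VD}_{k-1}$. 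Thus all hypotheses of the $(n-1)$-case hold, so $G\setminus u_1$ is $\mathtt{VD}_k$, and Definition~\ref{def:vertexDecomposableGraphs} applied to $G$ with vertex $u_1$ yields that $G$ is $\mathtt{VD}_k$.

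The main thing to be careful about — the only real subtlety — is the bookkeeping of neighborhoods when passing to the subgraph $G\setminus u_1$: one must check that deleting $u_1$ from $G$ and then forming the relevant closed-neighborhood deletions gives exactly the graphs appearing in the hypothesis list of the original lemma (with their indices shifted by one). Once the identities $N^\bullet_{G\setminus u_1}(u_j)\cup\{u_2,\ldots,u_{j-1}\}$, together with the removed $u_1$, reconstruct $N^\bullet_G(u_j)\cup\{u_1,\ldots,u_{j-1}\}$ are verified, the induction runs without incident. There is no genuine topological or combinatorial obstacle here; the lemma is essentially a repackaging of iterated applications of the recursive definition, and the proof is a short induction.
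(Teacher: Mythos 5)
Your proposal is correct and, once the induction on $n$ is unrolled, it performs exactly the same sequence of steps as the paper's proof: one application of Lemma~\ref{lemma:isolatedVD} to make $v$ isolated and then successive applications of Definition~\ref{def:vertexDecomposableGraphs} to the graphs $G\setminus\{u_1,\ldots,u_{i-1}\}$ with vertex $u_i$, using the identity $(G\setminus\{u_1,\ldots,u_{i-1}\})\setminus N^\bullet(u_i)=G\setminus(N^\bullet(u_i)\cup\{u_1,\ldots,u_{i-1}\})$. The only difference is presentational (top-down induction on $n$ versus the paper's explicit bottom-up iteration), so there is nothing to fix.
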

\begin{proof} 
From Lemma~\ref{lemma:isolatedVD} and that $G \setminus N^\bullet (v)$ is $\mathtt{VD}_{k-1}$ we get $G \setminus N^\circ(v) = G\setminus \{u_1,u_2, \ldots, u_n\}$ is $\mathtt{VD}_{k}$. Now the idea is to add the vertices $u_n, u_{n-1}, \ldots, u_1$ one by one to get $G$ and control the invariant $\mathtt{VD}_k$ during the process.

For $i=n,n-1,\ldots, 2,1,$ use Definition~\ref{def:vertexDecomposableGraphs}  on $G\setminus \{u_1,u_2, \ldots u_{i-1} \} $ with the vertex $u_i$. 
It follows that $G\setminus \{u_1,u_2, \ldots u_{i-1} \} $ is $\mathtt{VD}_k$  from that $(G\setminus \{u_1,u_2, \ldots u_{i-1} \}) \setminus u_i =  G\setminus \{u_1,u_2, \ldots u_{i} \}$ is $\mathtt{VD}_k$ and $(G\setminus \{u_1,u_2, \ldots u_{i-1} \}) \setminus N^\bullet (u_i)  = G\setminus ( N^\bullet (u_i) \cup \{u_1,u_2, \ldots u_{i-1} \}) $ is $\mathtt{VD}_{k-1}.$

With the last step of $i=1$, we add the vertex $u_1$ and get $G\setminus \{u_1,u_2, \ldots u_{i-1} \}=G$ which is $\mathtt{VD}_k$.
\end{proof}

For generic graphs, avoiding global structures as in cartesian products, the following proposition is efficient.

\begin{proposition}[Dochtermann \& Engstr\"om~\cite{dochtermannEngstrom2009}, Theorem 5.9]
Let $G$ be a graph on $n$ vertices and maximal degree $\Delta>0$. Then $G$ is $\mathtt{VD}_{\lfloor n/2\Delta \rfloor}.$
\end{proposition}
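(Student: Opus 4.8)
The plan is to induct on the number of vertices $n$, invoking Lemma~\ref{lemma:massiveVD} once at a carefully chosen vertex. If $n<2\Delta$ then $\lfloor n/2\Delta\rfloor=0$ and there is nothing to prove, so the induction really starts at $n\geq 2\Delta$, where I set $k=\lfloor n/2\Delta\rfloor\geq 1$. If $G$ has no edges, then $G$ is $\mathtt{VD}_n$ straight from Definition~\ref{def:vertexDecomposableGraphs}, hence $\mathtt{VD}_k$ since $k\leq n$ and $\mathtt{VD}$ is monotone in its index; so I may assume $G$ has an edge and fix a vertex $v$ of positive degree with $N(v)=\{u_1,u_2,\ldots,u_m\}$ and $1\leq m\leq\Delta$.

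The second step is to estimate the orders of the graphs occurring in the hypothesis of Lemma~\ref{lemma:massiveVD}. The graph $G\setminus N^\bullet(v)$ is obtained from $G$ by deleting $\deg(v)+1\leq\Delta+1\leq 2\Delta$ vertices, and for each $i$ the graph $G\setminus(N^\bullet(u_i)\cup\{u_1,\ldots,u_{i-1}\})$ is obtained by deleting at most $(\deg(u_i)+1)+(i-1)\leq(\Delta+1)+(m-1)\leq 2\Delta$ vertices, while in every case at least one vertex disappears. Hence each such graph $G'$ has order $n'$ with $n-2\Delta\leq n'\leq n-1$ and, being an induced subgraph of $G$, maximal degree at most $\Delta$. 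This is exactly where the factor $2$ in the statement (rather than the naive $\Delta+1$) enters, and where the linear ordering of the neighbours $u_1,\ldots,u_m$ in Lemma~\ref{lemma:massiveVD} is used.

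The third step is to check that each such $G'$ is $\mathtt{VD}_{k-1}$, which is precisely what Lemma~\ref{lemma:massiveVD} requires. If $G'$ has an edge, let $\Delta'$ be its maximal degree, so $1\leq\Delta'\leq\Delta$; the induction hypothesis gives that $G'$ is $\mathtt{VD}_{\lfloor n'/2\Delta'\rfloor}$, and since $\lfloor n'/2\Delta'\rfloor\geq\lfloor n'/2\Delta\rfloor\geq\lfloor(n-2\Delta)/2\Delta\rfloor=k-1$, monotonicity shows $G'$ is $\mathtt{VD}_{k-1}$. If $G'$ has no edges then $G'$ is $\mathtt{VD}_{n'}$, and $n'\geq n-2\Delta\geq k-1$, so again $G'$ is $\mathtt{VD}_{k-1}$. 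Applying Lemma~\ref{lemma:massiveVD} to the vertex $v$ then yields that $G$ is $\mathtt{VD}_k$, completing the induction.

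I expect no genuinely hard step here: Lemma~\ref{lemma:massiveVD} already packages the recursive mechanism, and the induction on $n$ is legitimate because every auxiliary graph loses at least one vertex. The only thing to be careful about is the bookkeeping in the second step together with the degenerate cases — an auxiliary graph becoming edgeless, or $v$ having degree strictly smaller than $\Delta$ — but all of these are absorbed by the monotonicity of $\mathtt{VD}$ in its index and by the edgeless base case.
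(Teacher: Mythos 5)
Your proof is correct and follows essentially the same route as the paper's: induction on the number of vertices with base case $n<2\Delta$, choosing a vertex and applying Lemma~\ref{lemma:massiveVD} after bounding the vertex loss in each auxiliary graph by $2\Delta$. You merely spell out more carefully the degenerate cases (edgeless subgraphs, smaller maximal degree) that the paper handles implicitly via monotonicity of $\mathtt{VD}$.
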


\begin{proof}
We do induction on the number of vertices. The basis case of the induction is when the number of vertices are $0 \leq n < 2\Delta.$ In that range the proposition states that $G$ should be 
$\mathtt{VD}_0,$ and all graphs satisfy that.

If $n\geq 2\Delta$ then fix some vertex $v$ of $G$ with neighborhood $N^\circ(v)=\{ u_1, u_2, \ldots, u_m \}$. Now consider the following subgraphs: $G \setminus N^\bullet (v)$ and  $G \setminus ( N^\bullet (u_i) \cup  \{u_1,u_2, \ldots, u_{i-1}\} )$ for $1\leq i \leq m.$ All of these graphs includes a subgraph of $G$ gotten by deleting an edge and all neighbors of the vertices of that edge. So, all of them have less vertices than $G$, but the difference is at most $2\Delta$ vertices. Thus by induction, and by the fact that the maximal degree never increases by taking subgraphs, all of them are $\mathtt{VD}_{\lfloor n/2\Delta \rfloor -1}.$ By Lemma~\ref{lemma:massiveVD}, the graph $G$ is $\mathtt{VD}_{\lfloor n/2\Delta \rfloor}.$
\end{proof}

\subsection{A few algorithms}
\begin{figure}
 \begin{center}
  \includegraphics[width=160mm]{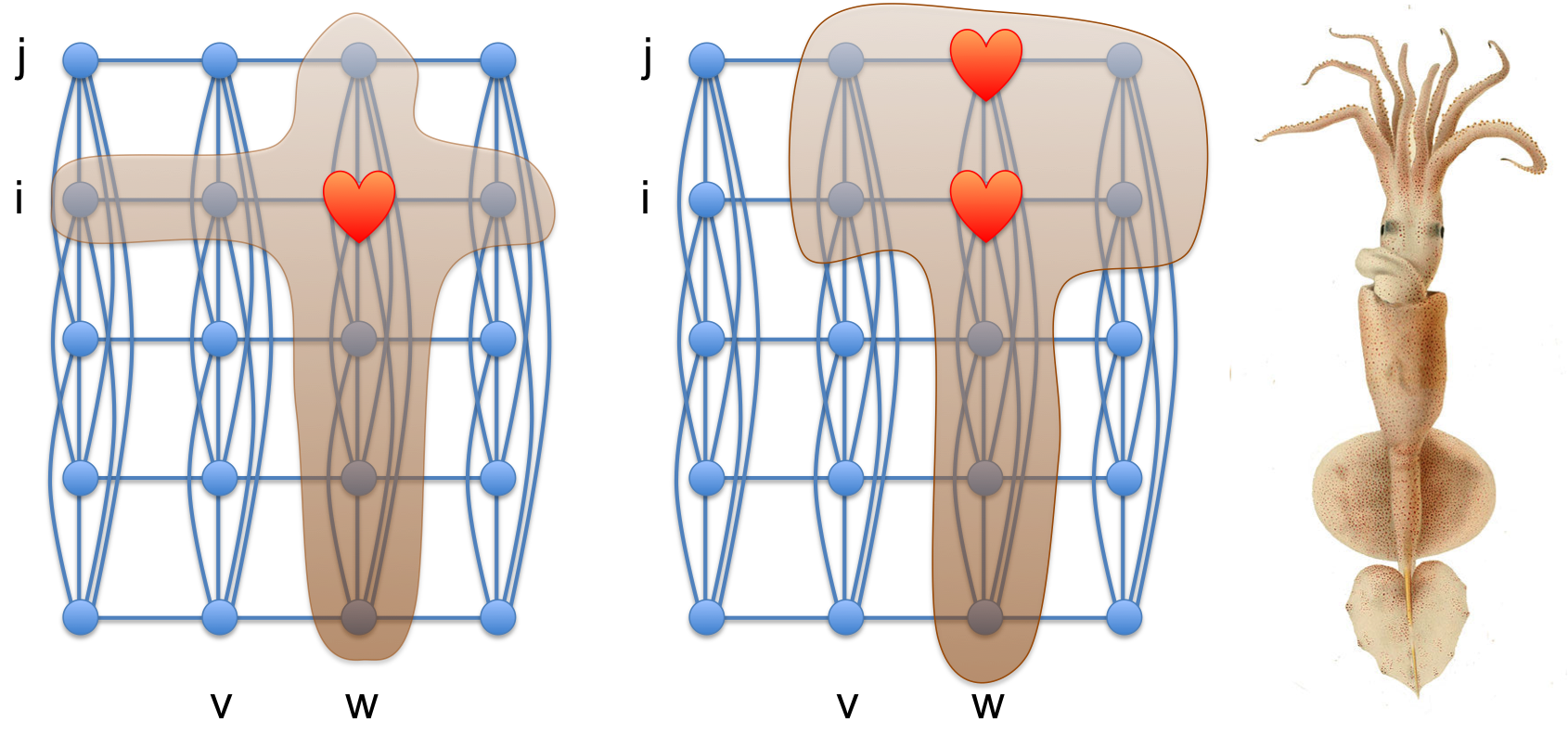}
 \caption{For $G$ a path on four vertices and $q=5$, any subset of the marked vertices is a squid in $G\square K_q$ with body $w$. On the right is a grimalditeuthis bonplandi squid without tentacles \cite{albert1900}.}\label{fig:Squids}
 \end{center} 
 \end{figure}
In Engstr\"om \cite{engstrom2011} a much weaker version of our main theorems was proved by removing squids. Our approach follows this idea, but is much more technically involved. To begin with we define a class of algorithms to remove squids, called \emph{DF-algorithms}. Then we prove that any DF-algorithm provides certificates that graphs are of the right 
$\mathtt{VD}_k$ class. 

But first we define the cartesian product and squids. The \emph{cartesian product} of two graphs $G$ and $H$, denoted $G\square H$, is the graph with vertex set $V(G)\times V(H)$ and edge set
\[ \{ (u,v)(u',v) \mid uu' \in E(G), v\in V(H) \} \cup \{ (u,v)(u,v') \mid u \in V(G), vv'\in E(H) \}. \] 
As an example, the cartesian product of the graph consisting of $k$ isolated vertices and the edge $K_2$ is $k$ isolated edges, a graph that is $\mathtt{VD}_k$. When passing to independence complexes, an important class of graphs are cartesian products of complete graphs, because they become chessboard complexes.

\begin{definition}
A \emph{squid} with \emph{body} $w$ in $G\square K_q$ is a subset of $V(G\square K_q)$ that  is either
\begin{itemize}
\item[(i)] a \emph{subset} of
     \[ (N^\circ_G(v)\cup N^\circ_G(w)) \times\{i\} \cup \{w\}\times\{1,2,\ldots, q\} \]
for two adjacent vertices $v$ and $w$, and $1\leq i\leq q$, or
\item[(ii)] a \emph{subset} of
     \[ N^\circ_G(w)\times\{i,j\} \cup \{w\}\times\{1,2,\ldots, q\} \]
where $1\leq i<j\leq q$.
\end{itemize}
The vertices not of the form $(w,k)$ are \emph{arms}. The \emph{heart} of a squid of type (i) is $(w,i)$ and the hearts of a squid of type (ii) are $(w,i)$ and $(w,j)$. The hearts and body is part of the squid data, and two squids could be on the same subset of $V(G\square K_q)$ but differ in that regard.
\end{definition}

 If $S$ is a squid, then we also use the symbol $S$ for the subset of $V(G\square K_q)$ in set theoretic statements if no confusion occurs. In Figure~\ref{fig:Squids} are examples of squids.
 An instance of squids removed from a cartesian product $G\square K_q$ is modeled as a \emph{DF-tuple}.

\begin{definition}
A \emph{DF-tuple} is a five tuple $(G,q,j,\{S_1,S_2,\ldots,S_j\},m)$ consisting of
\begin{itemize}
\item[(1)] a finite graph $G$ with vertices in $\mathbb{N};$
\item[(2)] integers $|G|\geq m\geq j\geq 0,$ and $q>0$; and
\item[(3)] squids $S_1, S_2, \ldots,S_j$ in $G\square K_q$.
\end{itemize}
\end{definition}

In the definition of DF-tuples nothing is assumed regarding if squids intersect each other or are empty. An adversary would try to achieve the opposite, to cover the cartesian product with as few squids as possible. A particularly bad situation would be if a whole copy of $K_q$ would be covered by arms of squids without anyone having its heart there. To avoid this we construct DF-algorithms. A DF-algorithm is a collection of DF-tuples with an instruction for how to remove one more squid if $j<m$. The squid to be removed is defined by a map from the collection of tuples into itself. Alternatively, we could have stated this as a decision-tree where the player trying to maximize $k$ in $\mathtt{VD}_{k}$ decides where  one heart of the squid should be, and the adversary decides on what type of squid with that heart that should be removed.

\begin{definition}\label{def:DF-algorithm}
A \emph{DF-algorithm} $(\mathbf{A},\mathcal{G})$ is a set $\mathcal{G}$ of DF-tuples, and a map
\[\mathbf{A}:\{(G,q,j,\{S_1,S_2\ldots,S_j\},m)\in\mathcal{G}|j<m \}\rightarrow \mathbb{N}\times\mathbb{N}\]
that for any $T=(G,q,j,\{S_1,S_2,\ldots,S_j\},m)\in\mathcal{G}$ with $j<m$,
satisfies 
\begin{itemize}
\item[(1)] $(v,i):=\mathbf{A}(T) \in V(H)$ where  $H=G\square K_q\setminus \cup^j_{i=1}{S_j}$, and
\item[(2)] if 
\begin{itemize}
\item[(a)] $S\subseteq N^\circ_H(v,i)\cup N^\circ_H(v,j')$ for some $(v,j')\in H,$ or 
\item[(b)] $S\subseteq(N^\circ_H(v,i)\cap G\square\{i\})\cup N^\circ_H(u,i)$ for some $u\in N^\circ_G(v)$ with $(u,i)\in H,$
\end{itemize}
then $(G,q,j+1,\{S_1,S_2,\ldots,S_j,S\},m)\in \mathcal{G}.$
\end{itemize}
\end{definition}

After setting up the definitions and notations for removing squids with DF-algorithms, we now prove that they certify that the relevant cartesian products are $\mathtt{VD}_k$.

\begin{theorem}\label{thm:VD}
Let $(\mathbf{A},\mathcal{G})$ be a DF-algorithm. If $(G,q,j, \{S_1,S_2,\ldots,S_j\},m)$ is in $\mathcal{G}$ then $G \square K_q \setminus \cup_{i=1}^j S_i$  is $\mathtt{VD}_{(m-j)}$.
\end{theorem}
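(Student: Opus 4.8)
The plan is to induct on the nonnegative integer $m-j$. If $m-j=0$ there is nothing to prove, since every graph is $\mathtt{VD}_0$ by Definition~\ref{def:vertexDecomposableGraphs}. So suppose $j<m$, write $H=G\square K_q\setminus\bigcup_{i=1}^j S_i$, and let $(v,i)=\mathbf{A}(T)$. I would apply Lemma~\ref{lemma:massiveVD} to $H$ with the pivot vertex $(v,i)$: it suffices to produce an ordering $u_1,\dots,u_n$ of $N^\circ_H(v,i)$ such that each of the graphs $H\setminus N^\bullet_H(v,i)$ and $H\setminus\bigl(N^\bullet_H(u_\ell)\cup\{u_1,\dots,u_{\ell-1}\}\bigr)$, for $1\le\ell\le n$, is $\mathtt{VD}_{m-j-1}$; Lemma~\ref{lemma:massiveVD} then yields that $H$ is $\mathtt{VD}_{m-j}$.

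To feed the induction hypothesis, the key step is to recognize each of these auxiliary graphs as being of the form $H\setminus S$ for a squid $S$ in $G\square K_q$ which, relative to $\mathbf{A}(T)=(v,i)$, satisfies case (a) or case (b) of Definition~\ref{def:DF-algorithm}(2). Once that is in place, the defining property of the DF-algorithm gives $(G,q,j+1,\{S_1,\dots,S_j,S\},m)\in\mathcal{G}$ (equipping the extra squid with any admissible body and hearts); since $H\setminus S=G\square K_q\setminus\bigl(\bigcup_{i=1}^j S_i\cup S\bigr)$ and $m-(j+1)<m-j$, the induction hypothesis applied to this tuple shows $H\setminus S$ is $\mathtt{VD}_{m-j-1}$, as wanted. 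The ordering is chosen so that the \emph{horizontal} neighbors $(u,i)$ with $u\in N^\circ_G(v)$ come before the \emph{vertical} neighbors $(v,i')$ with $i'\ne i$. For a horizontal $u_\ell=(u,i)$, the set $N^\bullet_H(u,i)\cup\{u_1,\dots,u_{\ell-1}\}$ lies inside $\bigl(N^\circ_G(u)\cup N^\circ_G(v)\bigr)\times\{i\}\cup\{u\}\times\{1,\dots,q\}$, hence is a squid of type (i) with body $u$, and it lies inside $\bigl(N^\circ_H(v,i)\cap G\square\{i\}\bigr)\cup N^\circ_H(u,i)$, so case (b) applies. For a vertical $u_\ell=(v,i')$ the earlier neighbors are all the horizontal ones together with the earlier verticals, and $N^\bullet_H(v,i')\cup\{u_1,\dots,u_{\ell-1}\}$ lies inside $N^\circ_G(v)\times\{i,i'\}\cup\{v\}\times\{1,\dots,q\}$, hence is a squid of type (ii) with body $v$, and it lies inside $N^\circ_H(v,i)\cup N^\circ_H(v,i')$, so case (a) applies. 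Finally $N^\bullet_H(v,i)$ itself is handled the same way using any surviving neighbor of $(v,i)$ in $H$: a vertical neighbor $(v,j')$ puts $N^\bullet_H(v,i)$ inside $N^\circ_H(v,i)\cup N^\circ_H(v,j')$ (case (a)), and a horizontal neighbor $(u,i)$ puts it inside $\bigl(N^\circ_H(v,i)\cap G\square\{i\}\bigr)\cup N^\circ_H(u,i)$ (case (b)). All these inclusions rest on the two observations that $(v,i)$ belongs to $N^\circ_H(v,i')$ and to $N^\circ_H(u,i)$, and that the already-removed earlier neighbors sit either in row $i$ or in column $v$.

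The main labor is the bookkeeping in the previous paragraph: verifying, case by case (type (i) versus type (ii), horizontal versus vertical neighbor), that the relevant closed neighborhoods together with the already-deleted earlier neighbors do lie inside the defining set of a squid and inside one of the two sets of Definition~\ref{def:DF-algorithm}(2). One boundary case needs separate attention: if $(v,i)$ is isolated in $H$ there is no surviving neighbor to invoke case (a) or (b), and then Lemma~\ref{lemma:massiveVD} only demands that $H\setminus\{(v,i)\}$ be $\mathtt{VD}_{m-j-1}$; here one argues directly, peeling off the isolated vertices of $H$ with Lemma~\ref{lemma:isolatedVD} together with the clause of Definition~\ref{def:vertexDecomposableGraphs} that an edgeless graph on $k$ vertices is $\mathtt{VD}_k$. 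I expect this case analysis, rather than any conceptual difficulty, to be the real content of the proof.
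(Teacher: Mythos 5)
Your main argument is the paper's own proof: induct on $m-j$, take the pivot $(v,i)=\mathbf{A}(T)$, realize every set demanded by Lemma~\ref{lemma:massiveVD} as $H\setminus S$ for a squid $S$ admissible under (2a)/(2b) of Definition~\ref{def:DF-algorithm}, and feed the extended tuples to the induction hypothesis. Your bookkeeping is in fact the version of that argument that works: putting the horizontal neighbors before the vertical ones and using case (b) for horizontal, case (a) for vertical is exactly what makes the inclusions true (the printed proof orders and labels these the other way around, and it replaces $N^\bullet_H(v,i)$ by the set $S''=N^\circ_H(v,i)\cap G\square\{i\}$, which you correctly avoid). One small slip: for $N^\bullet_H(v,i)$ you say \emph{any} surviving neighbor will do, but if a vertical neighbor $(v,s)$ survives then $(v,s)\in N^\bullet_H(v,i)$ lies neither in $N^\circ_H(v,i)\cap G\square\{i\}$ nor in $N^\circ_H(u,i)$, so the case (b) route via a horizontal neighbor fails; you must use a vertical neighbor whenever one exists and reserve (b) for the situation in which all surviving neighbors lie in row $i$. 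That is a one-line fix.

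The genuine gap is your fallback for an isolated pivot. There Lemma~\ref{lemma:isolatedVD} reduces the claim to showing that $H\setminus\{(v,i)\}$ is $\mathtt{VD}_{m-j-1}$, and nothing you propose delivers this: the induction hypothesis is available only for graphs $H\setminus S$ with $S$ admissible under (a) or (b), and both clauses require a witness $(v,j')\in H$ or $(u,i)\in H$, which is precisely what an isolated pivot lacks; and "peeling off isolated vertices together with the edgeless clause of Definition~\ref{def:vertexDecomposableGraphs}" says nothing about $H\setminus\{(v,i)\}$, which may have many edges far from the pivot. Worse, at the level of generality of Definition~\ref{def:DF-algorithm} the conclusion can actually fail in this situation: take $G=K_4$, $q=2$, $m=3$, and let $S_1$ be a type (ii) squid whose body is a vertex $w$ of $G$ and whose vertex set is all of $G\square K_2$ except one vertex $(v,1)$ with $v\neq w$. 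Then $H$ is a single vertex, condition (2) forces at most empty squids to be added, so the handful of resulting tuples forms a DF-algorithm, yet a one-vertex graph is not $\mathtt{VD}_2$. So this boundary case is not a formality you can wave at; it is exactly the point the paper's own proof glosses over (its $S''$ does not produce $H\setminus N^\bullet_H(v,i)$ either), and closing it requires information beyond the closure property: for instance, in Theorem~\ref{thm:DF1} the family $\mathcal{G}$ contains \emph{all} DF-tuples, so the induction hypothesis applies to the squid $\{(v,i)\}$ itself and Lemma~\ref{lemma:isolatedVD} finishes; for a general DF-algorithm one must either verify that the chosen pivots are never isolated (as the intended schemes arrange) or add such a non-degeneracy hypothesis to the statement.
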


\begin{proof}
Set $H= G \square K_q \setminus \cup_{i=1}^j S_i $. The proof is by induction on $m-j$. The base case $m=j$, that $H$ is $\mathtt{VD}_{0}$, follows from Definition~\ref{def:vertexDecomposableGraphs}.

Now assume that $m>j$ and set $(v,i') = \mathbf{A}((G,q,j,\{S_1,S_2,\ldots,S_j\},m))$. The neighbors of $(v,i')$ in $H$ are either in $G \times \{i'\}$ or in 
$ \{v \} \times K_q $. Chose a linear order of the neighbors
\[ N_H^\circ(v,i') = \{ (u_1,j_1), (u_2, j_2), \ldots, (u_n, j_n) \} \]
such that $u_1 = u_2 = \cdots u_k = v$ and $j_{k+1}=j_{k+2}= \cdots j_n = i'$ for some $k$.

For $l=1,2, \ldots, n$ define squids
\[ S'_l = N^\circ_H(u_l,j_l) \cup \{ (u_1,j_1), (u_2, j_2), \ldots, (u_l, j_l) \}. \]
By just parsing the definition of a DF-algorithm letter by letter in this situation, we see that $(G,q,j+1,\{S_1,S_2,\ldots,S_j,S'_l\},m)$ is in $\mathcal{G}$
\begin{itemize}
\item[] by (2.b) in Definition~\ref{def:DF-algorithm} for $1\leq l \leq k,$ and
\item[] by (2.a) in Definition~\ref{def:DF-algorithm} for $k< l \leq n.$
\end{itemize}
Define one more squid $S'' = N^\circ_H(v,i') \cap G\times \{i'\}$ and once again by just parsing (2.a) of Definition~\ref{def:DF-algorithm} letter by letter in this situation, we find that
$(G,q,j+1,\{S_1,S_2,\ldots,S_j,S''\},m)$ is in $\mathcal{G}.$ This is not an unexpected consequence of Definition~\ref{def:DF-algorithm}, rather the other way around. That definition was constructed to be able to prove this theorem with exactly this proof. The interested reader might simply reverse engineer Definition~\ref{def:DF-algorithm} from this proof.
There is nothing deep going on here, just formal verifications.

By induction, $H\setminus S''$, and $H \setminus S'_l$ for $1\leq l \leq n$, are $\mathtt{VD}_{(m-j-1)}$. We can now conclude by 
Lemma~\ref{lemma:massiveVD} that $H=G \square K_q \setminus \cup_{i=1}^j S_i$ is $\mathtt{VD}_{(m-j)}$.
\end{proof}

\begin{corollary}\label{cor:VD}
Let $(\mathbf{A},\mathcal{G})$ be a DF-algorithm. If
$(G,q,0, \emptyset ,m) \in \mathcal{G}$ then 
$G \square K_q $ is $\mathtt{VD}_{m}$.
\end{corollary}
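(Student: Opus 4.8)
The plan is to obtain this statement as an immediate specialization of Theorem~\ref{thm:VD}, with no new ideas required. Recall that Theorem~\ref{thm:VD} asserts that whenever a tuple $(G,q,j,\{S_1,S_2,\ldots,S_j\},m)$ lies in $\mathcal{G}$, the graph $G\square K_q\setminus\bigcup_{i=1}^j S_i$ is $\mathtt{VD}_{(m-j)}$.

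First I would apply this with $j=0$ and the empty collection of squids, as permitted by the hypothesis $(G,q,0,\emptyset,m)\in\mathcal{G}$. Then I would simply unwind the notation: the index set of the union $\bigcup_{i=1}^0 S_i$ is empty, so this union is $\emptyset$ and hence $G\square K_q\setminus\bigcup_{i=1}^0 S_i=G\square K_q$; moreover $m-j=m-0=m$. Substituting these two observations into the conclusion of Theorem~\ref{thm:VD} gives exactly that $G\square K_q$ is $\mathtt{VD}_{m}$, which is the claim.

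There is essentially no obstacle here; the corollary is a one-line consequence of the previous theorem, recorded separately only because the case $j=0$ (no squids removed yet) is the form in which the result will be invoked in Section~\ref{sec:equi}. The only point worth a glance is that the bundled inequalities $|G|\geq m\geq j\geq 0$ and $q>0$ needed for $(G,q,0,\emptyset,m)$ to make sense are already built into the definition of a DF-tuple, so nothing extra must be verified.
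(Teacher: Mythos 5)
Your proposal is correct and is exactly the paper's argument: the corollary is obtained by specializing Theorem~\ref{thm:VD} to $j=0$, where the empty union of squids leaves $G\square K_q$ intact and $m-j=m$.
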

\begin{proof}
This is a special case of Theorem~\ref{thm:VD}.
\end{proof}

We now introduce two DF-algorithms. Using the first one, we later show the same Tverberg type results as in Engstr\"om \cite{engstrom2011}, but employ only the combinatorial topology of shellability instead of stronger abstract tools from algebraic topology. This proves Conjecture 3.10 of \cite{engstrom2011}, and gives a result in the same spirit as Ziegler's paper \cite{ziegler1994}, where he proved that the optimal connectivity bounds of chessboard complexes can be proved by shelling skeletons of chessboard complexes.

\begin{theorem}\label{thm:DF1}
Fix a graph $G$ and a positive integer $m$ with $m\le |G|$. Let $q$ be an integer with $q>|N^2(v)| + 2|N^\circ(v)|$ for all vertices $v$ of $G.$

Let $\mathcal{G}$ be the set of DF-tuples $(G,q,j,\{S_1,S_2,\ldots,S_j\},m)$.

Then $G \square K_q \setminus \cup_{i=1}^j S_i$ is non-empty if $j<m$ and any map
\[\mathbf{A}:\{(G,q,j,\{S_1,S_2\ldots,S_j\},m)\in\mathcal{G}|j<m \}\rightarrow \mathbb{N}\times\mathbb{N}\]
sending $(G,q,j,\{S_1,S_2,\ldots,S_j\},m)$ to any vertex of $G \square K_q \setminus \cup_{i=1}^j S_i$ defines a DF-algorithm $(\mathbf{A},\mathcal{G}).$
\end{theorem}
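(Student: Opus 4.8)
The plan is to verify the two claims separately, with the bulk of the work in the non-emptiness statement. For the second claim, I would simply parse Definition~\ref{def:DF-algorithm} against the hypotheses here: the set $\mathcal{G}$ of all DF-tuples with the fixed $G,q,m$ is closed under appending any squid $S$ (regardless of type or overlap), so condition (2) of the definition of a DF-algorithm is automatic; condition (1) asks exactly that $\mathbf A(T)$ lands in $V(G\square K_q\setminus\bigcup_{i=1}^j S_i)$, which is precisely how $\mathbf A$ was chosen. The only subtlety is that $\mathbf A$ must be well-defined as a function into $\mathbb N\times\mathbb N$, i.e.\ there must actually be a vertex to send $T$ to whenever $j<m$; this is exactly the content of the first claim, so the two statements dovetail.

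So the real content is: if $j<m\le|G|$ and $S_1,\dots,S_j$ are arbitrary squids in $G\square K_q$, then $G\square K_q\setminus\bigcup_{i=1}^j S_i$ is non-empty. First I would set up an accounting argument. The vertex set of $G\square K_q$ has $|G|\cdot q$ vertices, arranged in $q$ ``layers'' $G\times\{i\}$. I would instead count layers, or rather count, for a fixed vertex $v\in V(G)$, how many of the $q$ copies $(v,1),\dots,(v,q)$ can be killed by $j<m\le|G|$ squids. The key structural observation is how a single squid $S$ with body $w$ can intersect the column $\{v\}\times\{1,\dots,q\}$ for a vertex $v\neq w$: its arms lie in at most two layers (the $\{i\}$ of type (i), or the $\{i,j\}$ of type (ii)), and within those layers the arms are confined to $N^\circ_G(v')\cup N^\circ_G(w)$ or $N^\circ_G(w)$ for some neighbor $v'$ of $w$. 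Hence a squid with body $w$ only touches the column over $v$ when $v$ is within distance two of $w$ (more precisely $v\in N^\circ(w)$, or $v\in N^2(w)$ via a neighbor $v'$ of $w$), and then only in at most two layers. Meanwhile the body column $\{w\}\times\{1,\dots,q\}$ is entirely eaten by its own squid.

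Now I would run the counting. Pick any $v\in V(G)$. A squid $S_i$ can remove some of $(v,1),\dots,(v,q)$ in three ways: (a) $v$ is the body of $S_i$, which happens for at most $j$ of the squids but in a controlled way; (b) $v\in N^\circ(w_i)$, costing at most two layers per such squid; (c) $v\in N^2(w_i)$ through an intermediate neighbor, again at most two layers per such squid. The number of squids whose body $w_i$ satisfies $v\in N^\bullet(w_i)$ is at most $|N^\circ(v)|+(\text{bodies equal to }v)$, and the number with $v\in N^2(w_i)\setminus N^\bullet(w_i)$ is at most $|N^2(v)|$ — here one has to be a little careful that being within distance two is symmetric enough for this bookkeeping to go through, but the graph distance-two relation is symmetric, so $v\in N^2(w)$ iff $w\in N^2(v)$. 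Putting this together, over the column of $v$ the squids with a body at distance $\le 1$ remove at most $2|N^\circ(v)|$ layers plus possibly the whole column if some $S_i$ has body $v$, and squids with body at distance exactly two remove at most $2|N^2(v)|$ layers. That is not quite the bound $|N^2(v)|+2|N^\circ(v)|$ in the hypothesis, so the argument has to be sharpened: one should choose $v$ cleverly (e.g.\ a vertex all of whose layers are candidates, or argue that since $j<m\le|G|$ not every column can have its body among $S_1,\dots,S_j$), reducing to the case where no $S_i$ has body $v$ and getting the per-squid cost down to one layer for the distance-two squids by exploiting that type-(ii) arms over $v$ lie in $N^\circ(w)$ only and a type-(i) arm over $v$ needs $v\in N^\circ(w)\cup N^\circ(v')$. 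The hard part, and the step I expect to be the main obstacle, is exactly this sharpening: showing that the crude ``two layers per squid'' estimate can be improved to match $|N^2(v)|+2|N^\circ(v)|$, and that one can always find a vertex $v$ whose column is not the body-column of any removed squid, so that $q>|N^2(v)|+2|N^\circ(v)|$ forces a surviving vertex $(v,i)$. Once that accounting is in place the theorem follows immediately, and feeding it back into the DF-algorithm check completes the proof.
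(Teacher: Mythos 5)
Your overall route is the same as the paper's: the second claim is, as you note, just a parsing of Definition~\ref{def:DF-algorithm} (since $\mathcal{G}$ consists of all DF-tuples, closure under condition (2) is automatic and condition (1) is exactly the non-emptiness statement), and for non-emptiness the paper likewise fixes a vertex $v$ that is not the body of any $S_i$ --- which exists simply because the number of bodies is at most $j<m\le|G|$ --- and bounds how many of the $q$ vertices of the column $\{v\}\times\{1,\dots,q\}$ can be arms of squids. But your write-up stops exactly at the decisive step, which you defer as ``the main obstacle'': the intended count is that a type (ii) squid meets this column only if its body is adjacent to $v$, and then in at most its two heart layers, while a type (i) squid meets it in at most one layer and only if its body is within distance two of $v$; hence, if each vertex of $N^\circ(v)\cup N^2(v)$ is the body of at most one of the removed squids, the column loses at most $2|N^\circ(v)|+|N^2(v)|<q$ vertices and some $(v,i)$ survives. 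You state both per-type observations in your second and third paragraphs but never assemble them, so as it stands the proposal is an outline rather than a proof.

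Moreover, the bookkeeping worry you raise is substantive and is not removed by the sharpening alone: bounding the damage by $2|N^\circ(v)|+|N^2(v)|$ counts squids through their bodies, which is only valid if the relevant bodies are distinct, and nothing in the hypotheses (where $\mathcal{G}$ is the set of \emph{all} DF-tuples) forces that. For instance, take $G=C_4$ with parts $\{u_1,u_2\}$ and $\{w_1,w_2\}$ and $q=6>5=|N^2(v)|+2|N^\circ(v)|$, and the three type (ii) squids $\{u_1,u_2\}\times\{1,2\}\cup\{w_1\}\times\{1,\dots,6\}$, $\{u_1,u_2\}\times\{3,4\}\cup\{w_2\}\times\{1,\dots,6\}$, $\{u_1,u_2\}\times\{5,6\}\cup\{w_1\}\times\{1,\dots,6\}$: here $j=3<m=4\le|G|$, yet their union is all of $C_4\square K_6$, because two squids share the body $w_1$ and the column over $u_1$ loses $6>5$ vertices. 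So the per-squid estimate you hope to sharpen genuinely fails for arbitrary squid collections with repeated bodies; the paper's own ``worst case'' sentence tacitly assumes each vertex of $N^\circ(v)\cup N^2(v)$ carries at most one squid. To complete your plan you would have to either build such a restriction into $\mathcal{G}$ (and check that the squids actually generated in the proof of Theorem~\ref{thm:VD} satisfy it) or find a different counting; without addressing this, the step you flagged as the main obstacle cannot be closed as stated.
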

\begin{proof}
The first step is to prove that $H=G \square K_q \setminus \cup_{i=1}^j S_i$ is non-empty. By assumption $j<m$ and there is a vertex in $v$ in $G$ that is not a body of a squid $S_i$. We claim that $v\square K_q\cap H$ is non-empty. If it was empty, it was deleted by arms of squids.

The worst case is if all vertices in $N(v)$ are bodies of type (ii) squids in $\{S_1,S_2,\ldots,S_j\}$ and all vertices in  $N^2(v)$ are bodies of type (i) squids in $\{S_1,S_2,\ldots,S_j\}$. In this case the maximal number of vertices removed from $v\square K_q$ is $|N^2(v)|+2|N(v)|$, but $q>|N^2(v)|+2|N(v)|$ and then $v\square K_q\cap H$ is non-empty.

Now a map
\[\mathbf{A}:\{(G,q,j,\{S_1,S_2\ldots,S_j\},m)\in\mathcal{G}|j<m \}\rightarrow \mathbb{N}\times\mathbb{N}\]
sending $(G,q,j,\{S_1,S_2,\ldots,S_j\},m)$ to any vertex of $G \square K_q \setminus \cup_{i=1}^j S_i$ defines a DF-algorithm $(\mathbf{A},\mathcal{G}),$
as there is no restrictions on the squids.
\end{proof}

\begin{corollary}\label{cor:DF1}
Let $q$ be an integer and $G$ a graph on $m$ vertices with $q>|N^2(v)| + 2|N(v)|$ for all vertices $v.$ Then $G \square K_q $ is $\mathtt{VD}_{m}$.
\end{corollary}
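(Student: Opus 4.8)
The plan is to read this off directly as a special case of the machinery just built, namely Theorem~\ref{thm:DF1} together with Corollary~\ref{cor:VD}; there is essentially no new content, only a bookkeeping check that the hypotheses line up.

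First I would set $m = |G|$, the number of vertices of $G$, so that the constraint $m \le |G|$ needed in Theorem~\ref{thm:DF1} holds with equality. The degree hypothesis $q > |N^2(v)| + 2|N^\circ(v)|$ for every vertex $v$ is exactly the hypothesis of Theorem~\ref{thm:DF1}, and in particular it forces $q > 0$ since the right-hand side is non-negative; this is what is needed for $(G,q,0,\emptyset,m)$ to be a legitimate DF-tuple (the remaining conditions $|G| \ge m \ge 0 \ge 0$ are immediate). Hence the empty instance $(G,q,0,\emptyset,m)$ lies in the set $\mathcal{G}$ of all DF-tuples of the form $(G,q,j,\{S_1,\ldots,S_j\},m)$ appearing in the statement of Theorem~\ref{thm:DF1}.

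Next I would invoke Theorem~\ref{thm:DF1} to obtain that some (indeed any) map $\mathbf{A}$ sending each tuple with $j<m$ to a vertex of $G\square K_q \setminus \bigcup_{i=1}^j S_i$ — which is non-empty precisely by the degree bound — makes $(\mathbf{A},\mathcal{G})$ into a DF-algorithm in the sense of Definition~\ref{def:DF-algorithm}. With a genuine DF-algorithm in hand and with $(G,q,0,\emptyset,m)\in\mathcal{G}$, Corollary~\ref{cor:VD} (itself the $j=0$ case of Theorem~\ref{thm:VD}) yields immediately that $G\square K_q$ is $\mathtt{VD}_m$, which is the assertion.

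There is no real obstacle here: the only thing to be careful about is the indexing convention, i.e. that ``$G$ on $m$ vertices'' means we take the parameter $m$ in the DF-tuple to equal $|G|$, so that $\mathtt{VD}_{(m-j)}$ with $j=0$ gives the sharp value $\mathtt{VD}_{|G|}$ rather than something smaller. Everything else is a direct citation of the preceding results.
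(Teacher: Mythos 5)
Your proposal is correct and follows exactly the paper's own route: observe that $(G,q,0,\emptyset,m)\in\mathcal{G}$ since Theorem~\ref{thm:DF1} places no restrictions on the squid collections (and supplies the DF-algorithm), then apply Corollary~\ref{cor:VD}. The extra bookkeeping you include (checking $m=|G|$, $q>0$, and the matching degree hypothesis) is just a more explicit version of the same two-line argument.
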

\begin{proof} There are no restrictions on the collections of squids in $\mathcal{G}$ from Theorem~\ref{thm:DF1} and then $(G,q,0, \emptyset ,m) \in \mathcal{G}$. Corollary~\ref{cor:VD} now proves the statement.
\end{proof}

To prove the second main theorem of this paper, we need a more dynamic way to remove squids. We will use the following strategy to remove squids from $G \square K_q$:
We first remove $n_1$ squids with hearts on the top row $G \times {r_1}$ where $r_1=1$. The removal of these squids will have different effect on the rows  $G \times {j}$ with $j>1$. If a large number of squids have arms also on row $G \times {j}$, then this row is a bad choice for continuing the removal of squids from. So the next step is to let $r_2$ be the top-most row with the most number of preserved vertices. We remove $n_2$ squids with hearts on the row $G \times {r_2}$ and proceed in the same manner, until $n_1+n_2+ \cdots + n_k$ is large enough. To ensure that we simply don't run out of vertices, the sizes $n_i$ are specified with a dynamic DF-size scheme.

\begin{definition}\label{def:dDFscheme}
Let $n,q,\Delta$ be positive integers and let $(n_1,n_2,\ldots, n_k)$ be a sequence of positive integers.

A tuple $(n,q,\Delta,(n_1,\ldots,n_k))$is a \emph{dynamic DF-size scheme} if
\begin{itemize}
\item[(1)] $q \geq k>0$ and all $n_i>0$; and
\item[(2)] for each $1 \leq j \leq k$
\[ 
   \left( \frac{ \Delta }{q-j+1} + 1 \right) \left(\sum_{i=1}^{j-1} n_i\right) + 2\Delta n_j  \leq n.
\]
\end{itemize}
\end{definition}

\begin{theorem}\label{thm:goodEpsilon}
For every $\varepsilon >0$ there exists a constant $K_\varepsilon$ such that for every graph $G$ with $N(1+\varepsilon)$ vertices  (with $N$�and $\Delta$ large enough depending on $\varepsilon$)  and \[  q> K_\varepsilon \Delta, \]
there is a dynamic DF-size scheme $(n_1,n_2,\ldots, n_k)$ with $n=N(1+\varepsilon)$ and $N\leq \sum_{i=1}^kn_i$.
\end{theorem}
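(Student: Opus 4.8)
The plan is to exhibit an explicit dynamic DF-size scheme with a geometric-type growth pattern $n_j = c\,\rho^{j-1}$ (rounded up to integers), choose the number of steps $k$ proportional to $q/\Delta$, and verify the two defining conditions of Definition~\ref{def:dDFscheme}. The key observation is that in condition~(2) the coefficient $\frac{\Delta}{q-j+1}+1$ multiplying the partial sum $\sum_{i=1}^{j-1} n_i$ stays close to $1$ as long as $j$ is bounded away from $q$, say $j \le q/2$; this is precisely where $q > K_\varepsilon \Delta$ enters, since we will only ever need $k$ on the order of $q/\Delta \cdot \log(1+\varepsilon)^{-1}$ steps, which is $o(q)$ for $\Delta$ large. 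So the first step is to record the estimate: for $1 \le j \le k$ with $k \le q/2$, we have $\frac{\Delta}{q-j+1}+1 \le 1 + \frac{2\Delta}{q} \le 1 + 2/K_\varepsilon$.

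Next I would set up the recursion. We want $\sum_{i=1}^k n_i \ge N$ while keeping each left-hand side of (2) at most $n = N(1+\varepsilon)$. Writing $\sigma_{j-1} = \sum_{i=1}^{j-1} n_i$, condition~(2) reads $(1+2/K_\varepsilon)\,\sigma_{j-1} + 2\Delta n_j \le N(1+\varepsilon)$. The natural choice is to make this an equality (up to rounding), i.e.\ $n_j = \bigl(N(1+\varepsilon) - (1+2/K_\varepsilon)\sigma_{j-1}\bigr)/(2\Delta)$. This is a linear recursion in $\sigma_j$: $\sigma_j = \sigma_{j-1} + n_j = \sigma_{j-1}\bigl(1 - \tfrac{1+2/K_\varepsilon}{2\Delta}\bigr) + \tfrac{N(1+\varepsilon)}{2\Delta}$, whose fixed point is $\sigma_\infty = \frac{N(1+\varepsilon)}{1+2/K_\varepsilon}$. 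Choosing $K_\varepsilon$ large enough that $\frac{1+\varepsilon}{1+2/K_\varepsilon} > 1$ — for instance $K_\varepsilon = 4/\varepsilon$ gives $\frac{1+\varepsilon}{1+\varepsilon/2} > 1$ — guarantees $\sigma_\infty > N$, so the monotone sequence $\sigma_j$ exceeds $N$ after finitely many steps. The number of steps needed is $k = O\bigl(\tfrac{\Delta}{1+2/K_\varepsilon}\log\tfrac{\sigma_\infty}{\sigma_\infty - N}\bigr)$, which is $O_\varepsilon(\Delta)$; since $q > K_\varepsilon\Delta$, we indeed have $k \le q/2$ once $\Delta$ is large, closing the loop with the estimate from the first step. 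One also needs $n_j > 0$ throughout, which holds because $\sigma_{j-1} < \sigma_\infty$ forces $N(1+\varepsilon) - (1+2/K_\varepsilon)\sigma_{j-1} > N(1+\varepsilon) - N(1+\varepsilon) = 0$ — wait, more carefully: $N(1+\varepsilon) - (1+2/K_\varepsilon)\sigma_{j-1} > N(1+\varepsilon) - (1+2/K_\varepsilon)\sigma_\infty = 0$, so $n_j > 0$ strictly.

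The remaining bookkeeping is the rounding: replacing each $n_j$ by $\lceil n_j \rceil$ can only increase $\sum n_i$ (helping the condition $N \le \sum n_i$) but also increases the left-hand side of (2) by at most $2\Delta$ per step plus a compounding $(1+2/K_\varepsilon)$ factor on the accumulated excess of at most $k$ from $\sigma_{j-1}$; since $k = O_\varepsilon(\Delta)$ and $\Delta$ is large, the total slack absorbed is $O_\varepsilon(\Delta^2)$, which is negligible against the budget $n = N(1+\varepsilon)$ once $N$ is large enough depending on $\varepsilon$ — this is exactly why the theorem's hypothesis asks $N$ and $\Delta$ to be large depending on $\varepsilon$. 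I would either carry a small multiplicative safety margin (replace $1+\varepsilon$ by $1+\varepsilon$ but target $\sigma_\infty \ge N(1+\varepsilon/2)$, say) so that the rounding errors fit comfortably, or equivalently shrink $n_j$ slightly before taking ceilings. The main obstacle is this interplay between the three "large enough" parameters $K_\varepsilon$, $N$, $\Delta$: one must choose $K_\varepsilon$ first (from the fixed-point inequality $\tfrac{1+\varepsilon}{1+2/K_\varepsilon} > 1$), which then determines the step count $k = O_\varepsilon(\Delta)$, and only then can one say how large $N$ and $\Delta$ must be for the rounding to be harmless and for $k \le q/2 < q$ (so that condition~(1), $q \ge k$, also holds). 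Everything else is a routine geometric-series estimate.
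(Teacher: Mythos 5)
Your proposal follows essentially the same route as the paper's proof: bound the varying coefficient $\frac{\Delta}{q-j+1}+1$ by a constant (the paper uses $a=1+\frac{\Delta}{q-k}$ and sets $a=\sqrt{1+\varepsilon}$, you use $1+2/K_\varepsilon$ via $j\le q/2$), define $n_j$ so that condition (2) holds with equality under that constant coefficient, and sum the resulting geometric sequence with $k=\Theta_\varepsilon(\Delta)$ steps; your fixed-point formulation of the recursion is just the closed-form geometric series of the paper in different clothing, and you are in fact more careful than the paper about integrality, which the paper explicitly waves away.

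One slip to repair: the specific choice $K_\varepsilon=4/\varepsilon$, taken only from the fixed-point inequality, does not by itself give $k\le q/2$, and the claim that this holds ``once $\Delta$ is large'' cannot be right, since both $k$ and the lower bound $K_\varepsilon\Delta$ on $q$ scale linearly in $\Delta$; the comparison is between constants, not sizes. Concretely, with $K_\varepsilon=4/\varepsilon$ one has $\frac{\sigma_\infty}{\sigma_\infty-N}\le 2+2/\varepsilon$ and hence $k\approx\frac{2\Delta}{1+2/K_\varepsilon}\ln\left(2+2/\varepsilon\right)$, and the requirement $k\le q/2$ needs $K_\varepsilon\gtrsim 4\ln\left(2+2/\varepsilon\right)$, which $4/\varepsilon$ fails for $\varepsilon$ larger than roughly $2$. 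Since the theorem only asserts the existence of some $K_\varepsilon$, the fix is immediate: take for instance $K_\varepsilon=\max\left(4/\varepsilon,\ 5\ln\left(2+2/\varepsilon\right)\right)$ (or note that the case $\varepsilon>1$ follows from the case $\varepsilon=1$, because enlarging $n$ in Definition~\ref{def:dDFscheme} only weakens condition (2)). With that adjustment, and with $N,\Delta$ large only where you actually need them (the rounding), your argument is complete and matches the paper's.
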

\begin{proof}
We only need asymptotic estimates and disregard that several of the variables should be integers. To satisfy (2) of Definition~\ref{def:dDFscheme} we prove that 
 \[  a \left(\sum_{i=1}^{j-1} n_i\right) + 2\Delta n_j  \leq n = N(1+\varepsilon)  \]
 for some $a$ when the $n_j$ are defined properly. To satisfy this inequality, with equality for all $j$, we set
 \[ n_j = \frac{N(1+\varepsilon)}{2\Delta}\left( \frac{2\Delta - a}{2\Delta} \right)^{j-1}. \]
 Now set $k=2\Delta \gamma$ and $a=\sqrt{1 + \varepsilon}$ in
 \[
 \begin{array}{rcl}
\displaystyle \sum_{j=1}^ks_j 
	&= & \displaystyle \frac{N(1+\varepsilon)}{2\Delta}  \frac{1-\left( \frac{2\Delta - a}{2\Delta} \right)^{k}}{1-\left( \frac{2\Delta - a}{2\Delta} \right)}  \\
	&= & \displaystyle N(1+\varepsilon) \frac{1-\left( \frac{2\Delta - a}{2\Delta} \right)^{k}}{a} \\
	&= & \displaystyle N\sqrt{1+\varepsilon} \left( 1-\left( 1- \frac{\gamma \sqrt{1+\varepsilon}}{2\Delta\gamma}  \right)^{2\Delta \gamma} \right) \\
	&\geq & \displaystyle N\sqrt{1+\varepsilon} \left( 1- e^{\gamma \sqrt{1+\varepsilon} } \right) \\
	& = & N \\
\end{array}
 \]
with $\gamma = - \frac{1}{\sqrt{1+\varepsilon}} \ln \left(1 -  \frac{1}{\sqrt{1+\varepsilon}} \right).$ Finally, the variable $a$ should satisfy
 \[ \sqrt{1 + \varepsilon} = a = 1 + \frac{\Delta}{q-k} = 1 + \frac{1}{K_\varepsilon - 2\gamma},  \]
 and we set
 \[ K_\varepsilon = \sqrt{1 + \varepsilon} -1 + 2\gamma =   \sqrt{1 + \varepsilon} -1 - \frac{2}{\sqrt{1+\varepsilon}} \ln \left(1 -  \frac{1}{\sqrt{1+\varepsilon}} \right).  \]
 \end{proof}

Now we describe how to get a DF-algorithm from a dynamic DF-size scheme.

\begin{definition}\label{def:dDFs}
Given a graph $G$ with vertices in $\mathbb{N}$ of maximal degree $\Delta$, and a dynamic DF-size scheme $(n_1,n_2,\ldots, n_k)$ with $n,q$;
the \emph{dynamic DF-scheme} is the set $\mathcal{G}$ of DF-tuples $(G,q,j,\{S_1,S_2,\ldots,S_j\},n)$ such that:
\begin{itemize}
\item for each $l$ with $s_1+s_2+\ldots+s_{l-1} \leq j$ all the squids
\[ S_{n_1+n_2+\ldots+n_{l-1}+1}, \ldots, S_{ \max\{n_1+n_2+\cdots +n_l,j\}} \]
have hearts on the same row $G \times r_l$,
\item all the $r_l$ are different,
\item when the squids with hearts on rows $G\times r_1, G\times r_2, \ldots, G \times r_{l-1}$ are deleted, then $G \times r_l$ is the top-most row with maximal number of preserved vertices.
\end{itemize}
together with a map
\[\mathbf{A}:\{(G,q,j,\{S_1,S_2\ldots,S_j,\},n)\in\mathcal{G}|j<n \}\rightarrow \mathbb{N}\times\mathbb{N}\]
defined as the vertex $(v,r_i)\in G\square K_q \setminus (S_1 \cup S_2 \cup \cdots \cup S_j)$ for which $n_1+n_2+\cdots+n_{i-1}< j \leq  n_1+n_2+\cdots+n_{i}$ and
\[ v = \min ( u \mid (u,r_i) \in G\square K_q \setminus (S_1 \cup S_2 \cup \cdots \cup S_j) .\]
\end{definition}

A dynamic DF-scheme is a DF-algorithm, since the dynamic DF-scheme guarantees 
\[  \{ u \mid (u,r_i) \in G\square K_q \setminus (S_1 \cup S_2 \cup \cdots \cup S_j \} \]
to be non-empty.

\begin{corollary}\label{cor:DF2}
For every $\varepsilon >0$ there exists a constant $K_\varepsilon$ such that for every graph $G$ with $N(1+\varepsilon)$ vertices (with $N$�and $\Delta$ large enough depending on $\varepsilon$) $G\square K_q$ 
is $\mathtt{VD}_N$ if $q> K_\varepsilon \Delta$
\end{corollary}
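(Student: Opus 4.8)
The plan is to combine the two immediately preceding results in the obvious way. The statement of Corollary~\ref{cor:DF2} says that $G\square K_q$ is $\mathtt{VD}_N$ once $q>K_\varepsilon\Delta$ and $|V(G)|=N(1+\varepsilon)$. By Corollary~\ref{cor:VD} it suffices to produce a DF-algorithm $(\mathbf{A},\mathcal{G})$ with $(G,q,0,\emptyset,N)\in\mathcal{G}$, and the mechanism for producing such an algorithm is the dynamic DF-scheme of Definition~\ref{def:dDFs}, which is a DF-algorithm whenever a dynamic DF-size scheme with parameters $n=N(1+\varepsilon)$ and $N\le\sum_{i=1}^k n_i$ exists. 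Theorem~\ref{thm:goodEpsilon} supplies exactly such a size scheme (with that value of $K_\varepsilon$), under the hypothesis that $N$ and $\Delta$ are large enough depending on $\varepsilon$. So the skeleton of the proof is: fix $\varepsilon>0$, take $K_\varepsilon$ as in Theorem~\ref{thm:goodEpsilon}, invoke that theorem to get the size scheme $(n_1,\dots,n_k)$, feed it into Definition~\ref{def:dDFs} to build the dynamic DF-scheme $\mathcal{G}$, observe $(G,q,0,\emptyset,n)\in\mathcal{G}$, and apply Corollary~\ref{cor:VD} to conclude $G\square K_q$ is $\mathtt{VD}_n$; then since $n=N(1+\varepsilon)\ge N$, downward monotonicity of $\mathtt{VD}_k$ (the proposition stating that $\mathtt{VD}_k$ implies $\mathtt{VD}_l$ for $l\le k$) gives $\mathtt{VD}_N$.

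The first concrete step I would write down is the reconciliation of parameters: Corollary~\ref{cor:VD} produces $\mathtt{VD}_m$ where $m$ is the last coordinate of the DF-tuple, and in Definition~\ref{def:dDFs} that last coordinate is $n=N(1+\varepsilon)$, not $N$. I need the size scheme to have $\sum n_i \ge n$ would be ideal, but Theorem~\ref{thm:goodEpsilon} only guarantees $\sum n_i\ge N$; however, the DF-tuple only requires $|G|\ge m\ge j\ge 0$, i.e. $m\le |V(G)|=N(1+\varepsilon)$, and the dynamic DF-scheme process runs for $j$ up to $n$, removing squids on $k$ distinct rows with $\sum n_i\ge N$ squids total. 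Here I should double-check that the scheme's definition is consistent: the squids are indexed up to $\max\{n_1+\cdots+n_l,j\}$ and $j$ ranges up to $n$, while $\sum n_i$ may be smaller than $n$, so I would clarify that once all $k$ rows are exhausted the remaining squids (up to index $n$) can be taken empty, or alternatively set $m=N$ rather than $m=n$ in the DF-tuple — either way $(G,q,0,\emptyset,m)\in\mathcal{G}$ for $m$ at least $N$, which is all that is needed. Then $\mathtt{VD}_m$ with $m\ge N$ yields $\mathtt{VD}_N$ by monotonicity.

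The second step is to verify the hypotheses of Definition~\ref{def:dDFscheme} and of the claim ``a dynamic DF-scheme is a DF-algorithm'' are genuinely met with the size scheme from Theorem~\ref{thm:goodEpsilon}: in particular that $q\ge k$ (recall $k=2\Delta\gamma$ and $q>K_\varepsilon\Delta$ with $K_\varepsilon = \sqrt{1+\varepsilon}-1+2\gamma>2\gamma$, so $q>2\gamma\Delta=k\cdot\tfrac{\Delta}{\Delta}$ — here one must be a touch careful that $k=2\Delta\gamma$ and $q>2\gamma\Delta$ gives $q>k$, using $\Delta\ge 1$), and that the non-emptiness guarantee $\{u\mid (u,r_i)\in G\square K_q\setminus(S_1\cup\cdots\cup S_j)\}\ne\emptyset$ holds — this is precisely what inequality (2) of Definition~\ref{def:dDFscheme} was engineered to ensure, counting the at most $\left(\tfrac{\Delta}{q-j+1}+1\right)\sum_{i<l}n_i$ arm-vertices deleted from row $r_l$ by earlier squids plus the $2\Delta n_l$ deleted by the $n_l$ squids with hearts on that row, which stays below $n=N(1+\varepsilon)$. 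Once these are in place the argument is purely a matter of citing Corollary~\ref{cor:VD}.

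I do not expect a genuine obstacle here, since this corollary is assembled entirely from results stated earlier in the excerpt; the only thing requiring care is the bookkeeping between $N$, $n=N(1+\varepsilon)$, $m$, $k=2\Delta\gamma$, and $\sum n_i$, and the explicit check that $q>K_\varepsilon\Delta$ forces both $q\ge k$ and the per-row non-emptiness inequality (2) of Definition~\ref{def:dDFscheme}. The ``$N$ and $\Delta$ large enough depending on $\varepsilon$'' clause is there to absorb the rounding of $n_j=\tfrac{N(1+\varepsilon)}{2\Delta}\left(\tfrac{2\Delta-a}{2\Delta}\right)^{j-1}$ and $k=2\Delta\gamma$ to integers and the asymptotic estimate $(1-\tfrac{1}{m})^{m}\le e^{-1}$ used in Theorem~\ref{thm:goodEpsilon}; I would simply carry that same clause through verbatim rather than re-deriving the error bounds. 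So the proof I would write is three or four lines: cite Theorem~\ref{thm:goodEpsilon} for the size scheme, cite Definition~\ref{def:dDFs} and the remark after it that a dynamic DF-scheme is a DF-algorithm, cite Corollary~\ref{cor:VD} to get $\mathtt{VD}_n$, and cite the monotonicity proposition to descend to $\mathtt{VD}_N$.
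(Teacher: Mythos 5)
Your proposal is correct and follows exactly the route the paper takes: its entire proof of Corollary~\ref{cor:DF2} is the one-line citation of Theorem~\ref{thm:goodEpsilon}, Definition~\ref{def:dDFs} and Corollary~\ref{cor:VD}, which you have simply spelled out. Your extra bookkeeping (taking the tuple's last coordinate to be $N\le\sum_i n_i$ rather than $n=N(1+\varepsilon)$, and checking $q\ge k$ and the row non-emptiness) is precisely the intended reading and, if anything, more careful than the paper's own wording.
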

\begin{proof}
This follows directly from Corollary~\ref{cor:VD}, Theorem~\ref{thm:goodEpsilon} and Definition~\ref{def:dDFs}.
\end{proof}

\section{Equivariant Topology}\label{sec:equi}

In this section we will use the facts about vertex decomposable graphs derived in Section~\ref{sec:vdg} to derive new theorems of Tverberg type.
Recall that a set of vertices of a graph $G$ is independent if none of them are adjacent. The \emph{independence complex} of a graph $G$, denoted $\mathtt{Ind}(G)$, is the simplicial complex on the same vertex set as $G$ whose faces are the independent sets of $G$. For basic combinatorial topology we refer to Bj\"orner's excellent survey~\cite{bjorner1995}, but we collect a few useful facts. The link of a vertex $v$ of $\Sigma$ is $\mathrm{lk}_\Sigma(v)=\{\sigma \in \Sigma\ \mid v\not\in \sigma,\,\, \sigma \cup \{v\} \in \Sigma \}$, and the deletion of $v$ is $\mathrm{dl}_\Sigma(v)=\Sigma\setminus v = \{ \sigma \in \Sigma \mid v \not\in \sigma \}.$ For independence complexes $\mathrm{lk}_{\mathtt{Ind}(G)}(v)=\mathtt{Ind}(G\setminus N^\bullet(v))$ and $\mathrm{dl}_{\mathtt{Ind}(G)}(v)=\mathtt{Ind}(G\setminus v).$ A more comprehensive introduction to basic operations on independence complexes is given in~\cite{engstrom2008}. The $k$-skeleton of $\Sigma$ is $\Sigma^{\leq k} = \{ \sigma \in \Sigma \mid \mathrm{dim}\, \sigma \leq k \},$ and an easy exercise is
$\mathrm{lk}_{\Sigma^{\leq k}}(v) = \mathrm{lk}_{\Sigma}(v)^{\leq k-1}$ and  $\mathrm{dl}_{\Sigma^{\leq k}}(v) = \mathrm{dl}_{\Sigma}(v)^{\leq k}.$

\begin{definition}\label{def:VD}
A simplicial complex $\Sigma$ is \emph{vertex decomposable} if it is pure, and either $\Sigma = \{\emptyset\}$ or it has a vertex $v$ with $\mathrm{lk}_\Sigma(v)$ and $\mathrm{dl}_\Sigma(v)$ vertex decomposable.
\end{definition}

The most important consequences of a pure $d$-dimensional complex being vertex decomposable, is that it is shellable, homotopically a wedge of $d$-dimensional spheres, and, in particular, $(d-1)$-connected.

\begin{proposition}\label{prop:isVD}
If $G$ is a $\mathtt{VD}_k$ graph then $\mathtt{Ind}(G)^{\leq k-1}$ is pure $(k-1)$-dimensional and 
vertex decomposable.
\end{proposition}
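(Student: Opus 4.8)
The plan is to prove Proposition~\ref{prop:isVD} by induction on the number of vertices of $G$, closely mirroring the recursive structure of Definition~\ref{def:vertexDecomposableGraphs} and transporting it to Definition~\ref{def:VD} via the standard dictionary $\mathrm{lk}_{\mathtt{Ind}(G)}(v) = \mathtt{Ind}(G\setminus N^\bullet(v))$, $\mathrm{dl}_{\mathtt{Ind}(G)}(v) = \mathtt{Ind}(G\setminus v)$, together with the skeleton identities $\mathrm{lk}_{\Sigma^{\le k}}(v) = \mathrm{lk}_\Sigma(v)^{\le k-1}$ and $\mathrm{dl}_{\Sigma^{\le k}}(v) = \mathrm{dl}_\Sigma(v)^{\le k}$ recalled just before the statement. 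There are really two things to establish: purity of $\mathtt{Ind}(G)^{\le k-1}$ as a $(k-1)$-dimensional complex, and vertex decomposability in the sense of Definition~\ref{def:VD}.

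First I would handle the base cases. If $k = 0$, then $\mathtt{Ind}(G)^{\le -1} = \{\emptyset\}$, which is vacuously pure of dimension $-1$ and vertex decomposable by definition. If $G$ has $k$ vertices and no edges, then $\mathtt{Ind}(G)$ is the full $(k-1)$-simplex, so $\mathtt{Ind}(G)^{\le k-1} = \mathtt{Ind}(G)$ is pure $(k-1)$-dimensional; vertex decomposability follows by a trivial sub-induction peeling off vertices one at a time (each link and deletion is again a full simplex of the appropriate dimension on fewer vertices). For the inductive step, suppose $G$ is $\mathtt{VD}_k$ via a vertex $v$ with $G\setminus v$ being $\mathtt{VD}_k$ and $G\setminus N^\bullet(v)$ being $\mathtt{VD}_{k-1}$. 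By the induction hypothesis applied to these smaller graphs, $\mathtt{dl}_{\mathtt{Ind}(G)}(v)^{\le k-1} = \mathtt{Ind}(G\setminus v)^{\le k-1}$ is pure $(k-1)$-dimensional and vertex decomposable, and $\mathtt{lk}_{\mathtt{Ind}(G)}(v)^{\le k-2} = \mathtt{Ind}(G\setminus N^\bullet(v))^{\le k-2}$ is pure $(k-2)$-dimensional and vertex decomposable. By the skeleton identities, these are exactly $\mathrm{dl}_{\mathtt{Ind}(G)^{\le k-1}}(v)$ and $\mathrm{lk}_{\mathtt{Ind}(G)^{\le k-1}}(v)$, so the recursive clause of Definition~\ref{def:VD} applies and gives vertex decomposability of $\mathtt{Ind}(G)^{\le k-1}$ — provided it is pure.

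Purity is the step I expect to require the most care, because Definition~\ref{def:VD} demands it as a standing hypothesis before one may invoke the recursion, so it cannot simply be deduced after the fact. The argument: every facet of $\mathtt{Ind}(G)^{\le k-1}$ either contains $v$ or does not. A facet containing $v$ corresponds to a facet of $\mathrm{lk}(v)^{\le k-2}$ with $v$ adjoined, hence has dimension $(k-2)+1 = k-1$ by purity of the link skeleton. A facet not containing $v$ is a facet of $\mathrm{dl}(v)^{\le k-1}$; by purity of that complex it has dimension $k-1$ — but one must also check that such a face is not properly contained in a face of $\mathtt{Ind}(G)^{\le k-1}$ that does use $v$, i.e. that maximality in the deletion is maximality in the whole skeleton. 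This is where one uses that $\mathtt{dl}(v)^{\le k-1}$ is already pure of the full dimension $k-1$: a $(k-1)$-face cannot be extended at all within a $(k-1)$-skeleton. So in both cases facets have dimension $k-1$, giving purity. I would then remark that the "in particular" conclusions — shellability, homotopy type a wedge of $(k-1)$-spheres, and $(k-2)$-connectivity — follow from the well-known consequences of vertex decomposability for pure complexes, as already noted in the paragraph preceding the proposition, so no extra work is needed there.
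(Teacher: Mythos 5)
Your proposal is correct and takes essentially the same route as the paper's proof: the same base cases ($\{\emptyset\}$ and the edgeless graph giving a simplex), the same purity argument splitting facets of $\mathtt{Ind}(G)^{\le k-1}$ according to whether they contain the decomposing vertex $v$ (extending $\sigma\setminus v$ inside the pure link skeleton, resp.\ $\sigma$ inside the pure deletion skeleton), and the same use of the identities $\mathrm{dl}_{\mathtt{Ind}(G)^{\le k-1}}(v)=\mathtt{Ind}(G\setminus v)^{\le k-1}$ and $\mathrm{lk}_{\mathtt{Ind}(G)^{\le k-1}}(v)=\mathtt{Ind}(G\setminus N^\bullet(v))^{\le k-2}$ to run the recursion of Definition~\ref{def:VD}. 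The only cosmetic difference is that you package everything as a single induction on the number of vertices, whereas the paper treats purity and vertex decomposability in two successive passes over the same case analysis.
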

\begin{proof}
We first prove that if $G$ is $\mathtt{VD}_{k}$ then $\mathtt{Ind}(G)^{\leq k-1}$ is pure $(k-1)$-dimensional.

The first case is that $\mathtt{Ind}(G)^{\leq -1} = \{ \emptyset \}$ is pure $(-1)$-dimensional for all $G$.

The second case is when $G$ is a $k$-vertex graph without edges. Then $\mathtt{Ind}(G)^{\leq k-1}$ is a $(k-1)$-simplex and pure $(k-1)$-dimensional.

The third case is when $G$ is $\mathtt{VD}_{k}$ since $G\setminus v$ is $\mathtt{VD}_{k}$ and $G\setminus N^\bullet(v)$ is $\mathtt{VD}_{k-1}.$ Say that $\sigma \in \mathtt{Ind}(G)^{\leq k-1}$ would be a facet of dimension less than $k-1$ to reach a contradiction. If $v\not\in \sigma$ then we get a contradiction right off since $\sigma$ is in the pure $(k-1)$-dimensional complex $\mathtt{Ind}(G \setminus v)^{\leq k-1}.$ If $v\in \sigma$, then $\sigma \setminus v$ is not a facet of $\mathtt{Ind}(G \setminus N^\bullet(v))^{\leq k-2}$ since it is pure and $(k-2)$-dimensional. If we extend $\sigma \setminus v$ to a facet $\tau$ in $\mathtt{Ind}(G \setminus N^\bullet(v))^{\leq k-2}$, then $\sigma$ is strictly included in the facet $\tau \cup \{v\}$ of $\mathtt{Ind}(G)^{\leq k-1}$ and we have a contradiction.

Now we prove that $\mathtt{Ind}(G)^{\leq k-1}$ is vertex decomposable if $G$ is $\mathtt{VD}_{k}$.

The complex $\{ \emptyset \}$ is vertex decomposable by definition, and simplices are by an easy argument left to the reader.

Now to the case that $G$ is $\mathtt{VD}_{k}$ since $G\setminus v$ is $\mathtt{VD}_{k}$ and $G\setminus N^\bullet(v)$ is $\mathtt{VD}_{k-1}.$
The deletion $\mathrm{dl}_{\mathtt{Ind}(G)^{\leq k-1}}(v) = \mathrm{dl}_{\mathtt{Ind}(G)}(v)^{\leq k-1}= \mathtt{Ind}(G\setminus v)^{\leq k-1}$ is vertex decomposable since 
 $G\setminus v$ is $\mathtt{VD}_{k}$. The link $\mathrm{lk}_{\mathtt{Ind}(G)^{\leq k-1}}(v) = \mathrm{lk}_{\mathtt{Ind}(G)}(v)^{\leq k-2}= \mathtt{Ind}(G\setminus N^\bullet(v))^{\leq k-2}$
 is vertex decomposable since $G\setminus N^\bullet(v)$ is $\mathtt{VD}_{k-1}.$ We conclude that $\mathtt{Ind}(G)^{\leq k-1}$ is vertex decomposable.
\end{proof}

\begin{theorem}\label{theorem:mainEqui}
Let $q\geq 2$ be a prime power, $d\geq 1,$ and set $N=(d+1)(q-1)+1$. Let $\Sigma$ be a simplex on the same vertex set as $G$ and $f$ a continuous function from $\Sigma$ to $\mathbb{R}^d$.
If $G\square K_q$ is $\mathtt{VD}_{N}$, then there is a $q$-coloring of $G$
\[ C_1 \cup C_2 \cup \cdots \cup C_q = V(G) \]
such that 
\[ \bigcap_{i=1}^q f(\textrm{\emph{simplex spanned by} $C_i$}) \]
 is non-empty.
\end{theorem}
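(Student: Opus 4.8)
The plan is to follow the standard Configuration Space / Test Map (CS/TM) scheme for topological Tverberg theorems, using the hypothesis $G\square K_q$ is $\mathtt{VD}_N$ to supply the needed connectivity of the configuration space. First I would set up the configuration space: $q$-colorings of $G$ together with a choice of point in each color's spanned simplex correspond to points of a deleted join type construction. Concretely, a $q$-coloring of $V(G)$ where color $i$ gets vertex set $C_i$ (and the $C_i$ form a partition, possibly with empty parts) is exactly an independent set in $G\square K_q$: the vertex $(v,i)$ is "on" iff $v\in C_i$, and the $K_q$-factor edges forbid a vertex getting two colors while the $G$-factor edges are harmless here (or one restricts to proper colorings when $G$ itself carries constraints). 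So the relevant complex is a full-dimensional skeleton of $\mathtt{Ind}(G\square K_q)$. By Proposition~\ref{prop:isVD}, since $G\square K_q$ is $\mathtt{VD}_N$, the complex $\mathtt{Ind}(G\square K_q)^{\leq N-1}$ is pure $(N-1)$-dimensional and vertex decomposable, hence shellable and in particular $(N-2)$-connected.

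Next I would build the $\mathbb{Z}/q$-equivariant test map. The group $\mathbb{Z}/q$ acts on $G\square K_q$ by cyclically permuting the $K_q$ coordinate, inducing a free action on a suitably chosen deleted join / deleted product built from this complex (freeness comes from the fact that in a genuine $q$-partition the parts are distinguishable and a cyclic shift moves points; one restricts to the subcomplex of faces hitting all needed "levels", or uses the chessboard-complex-style argument that the relevant skeleton is $(N-1)$-connected and the action is free on it away from a low-dimensional bad set). The test map sends a configuration (a coloring plus a point $x_i$ in $f$ of each color class) to the tuple $(x_1-\bar x, \ldots, x_q-\bar x)$ where $\bar x$ is the average, landing in the direct sum of $q$ copies of $\mathbb{R}^d$ modulo the diagonal — this is $W_q^{\oplus d}$ where $W_q$ is the standard $(q-1)$-dimensional representation of $\mathbb{Z}/q$. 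The intersection $\bigcap_i f(\mathrm{conv}\,C_i)$ is nonempty iff some configuration maps to $0$. A dimension count: $W_q^{\oplus d}$ has real dimension $d(q-1)$, so its unit sphere $S(W_q^{\oplus d})$ has dimension $d(q-1)-1 = N-2$. Since the domain complex is $(N-2)$-connected and carries a free $\mathbb{Z}/q$-action, a Borsuk–Ulam / Dold-type argument (valid since $q$ is a prime power, which is exactly where that hypothesis enters) shows no equivariant map to $S(W_q^{\oplus d})$ exists, so $0$ is in the image and the desired coloring exists.

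The main obstacle I expect is the bookkeeping around empty color classes and ensuring the $\mathbb{Z}/q$-action on the configuration space is free, so that the Dold/Volovikov obstruction applies cleanly. If some $C_i$ is empty then "the simplex spanned by $C_i$" is empty and the intersection is automatically empty, so one must verify that the purity/high-connectivity of $\mathtt{Ind}(G\square K_q)^{\leq N-1}$ together with $N = (d+1)(q-1)+1$ forces, in the relevant part of the argument, configurations with all parts nonempty — this is where the precise value $N=(d+1)(q-1)+1$ is used, matching the count in Tverberg's theorem so that a facet of the $(N-1)$-skeleton, having $N$ vertices $(v,i)$, must use every color $i$ at least once by pigeonhole only after a further argument, or more carefully one passes to the chessboard-type subcomplex where each $K_q$-row is hit. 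I would handle this exactly as in the $G$ = empty-constraints case (the classical topological Tverberg proof of Bárány–Shlosman–Szűcs and its prime-power extension by Özaydin and Volovikov), replacing their chessboard/deleted-join connectivity input with our Proposition~\ref{prop:isVD}. The remaining steps — identifying $\mathtt{Ind}(G\square K_q)$ with the space of colored configurations, checking equivariance of the test map, and invoking the prime-power Borsuk–Ulam theorem — are routine once the connectivity and freeness are in place.
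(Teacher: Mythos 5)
Your overall architecture matches the paper's: identify the space of colored configurations with (a skeleton of) $\mathtt{Ind}(G\square K_q)$, get $(N-2)$-connectivity from Proposition~\ref{prop:isVD} via vertex decomposability, and then run a configuration-space/test-map argument against a target of dimension $N-2$. The gap is in the equivariant step. You propose a free $\mathbb{Z}/q$-action and a ``Borsuk--Ulam / Dold-type argument (valid since $q$ is a prime power).'' That is exactly the step that fails when $q=p^k$ with $k>1$: Dold's theorem needs the action on the \emph{target} sphere $S(W_q^{\oplus d})$ to be free, and the cyclic action is not free there --- every element lying in a proper subgroup of $\mathbb{Z}/q$ (e.g.\ the shift by $q/p$ when $q=p^k$, $k>1$) fixes nonzero vectors of the form of $(q/p)$-periodic tuples summing to zero. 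Restricting to an order-$p$ subgroup does not help, since all such elements are shifts by multiples of $q/p$ and have the same problem. This is precisely why the prime-power case required \"Ozaydin's and Volovikov's contribution: one replaces the cyclic group by the elementary abelian $p$-group $\mathbb{Z}_p^k$, which acts freely on $\mathtt{Ind}(G\square K_q)$ (freeness on the domain is automatic, since two vertices $(v,i),(v,j)$ in the same $K_q$-fiber are adjacent, so no face can be stabilized by a color permutation without fixed colors) and without fixed points on $(\mathbb{R}^d)^{\ast q}$ minus the diagonal, and then invokes Volovikov's Borsuk--Ulam-type theorem, which tolerates a non-free action on the target. So your statement that ``the prime-power hypothesis enters'' in a Dold-type argument for $\mathbb{Z}/q$ is not a correct account of where and how it enters, and your worry about freeness is aimed at the wrong side (the domain, which is unproblematic) rather than the target, which is the real obstruction.

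A secondary, fixable inaccuracy: your test map $(x_1-\bar x,\ldots,x_q-\bar x)$ presupposes a chosen point in every color class, i.e.\ all parts nonempty, which is not how the configuration space is structured. The paper (following the classical scheme) uses the $q$-fold join: a point is an affine combination $\lambda_1 x_1\oplus\cdots\oplus\lambda_q x_q$, empty parts simply get $\lambda_i=0$, and the ``bad'' set avoided under the contradiction hypothesis is the diagonal $\{\tfrac1q\mathbf{x}\oplus\cdots\oplus\tfrac1q\mathbf{x}\}$, so the purity/pigeonhole discussion you anticipate is unnecessary. With the join-based test map and Volovikov's theorem for $\mathbb{Z}_p^k$ in place of your $\mathbb{Z}/q$/Dold step, your outline becomes the paper's proof.
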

\begin{proof}
The complex $\mathtt{Ind}(G \square K_q)^{\leq N-1}$ is vertex decomposable by Proposition~\ref{prop:isVD} since $G\square K_q$ is $\mathtt{VD}_{N}$.
The complex $\mathtt{Ind}(G \square K_q)$ is $(N-2)$-connected since $\mathtt{Ind}(G \square K_q)^{\leq N-1}$ is that.

Now the remaining part of the proof is standard equivariant topology, a minor modification of Theorem 2.2 in \cite{engstrom2011}, and we only sketch the proof. 

The map $f$ from $\Sigma$ to $\mathbb{R}^d$ induces a map $f^{\ast q}$ from the $q$-fold join 
$\Sigma^{\ast q}$ to the $q$-fold join $(\mathbb{R}^d)^{\ast q}$. If we restrict $\Sigma^{\ast q}$ to the
$\sigma_1 \ast \sigma_2 \ast  \cdots \ast \sigma_q$ where all pairs $\sigma_i,\sigma_j$ are disjoint, then we get the 2-wise $q$-fold deleted join
\[ \mathtt{Ind}(G' \square K_q) \]
where $G'$ is the graph on the same vertex set as $G$ but with no edges. If we further restrict the deleted join to require that all $\sigma_i$ are independent sets, then we get
\[ \mathtt{Ind}(G \square K_q). \]

To prove the theorem by contradiction, suppose that there is no $q$-coloring whose images of the faces given by the colors intersect in a non-empty set. Then the image of the map can be restricted, and we have a map
\[ f^{\ast q} : \mathtt{Ind}(G \square K_q) \rightarrow (\mathbb{R}^d)^{\ast q} \setminus \{ \gamma_1\mathbf{x} + \cdots + \gamma_q\mathbf{x} \mid \mathbf{x} \in \mathbb{R}^d \}. \]
By assumption $q$ is a prime power $p^k$, and there is a free $\mathbb{Z}_p^k$ action on $\mathtt{Ind}(G \square K_q)$ and $(\mathbb{R}^d)^{\ast q}$ by permuting the $q$ coordinates. This action extends to the map $f^{\ast q}$.
By a Borsuk-Ulam type argument of Volovikov~\cite{volovikov1996}, such an equivariant map into $ (\mathbb{R}^d)^{\ast q} \setminus \{ \gamma_1\mathbf{x} + \cdots + \gamma_q\mathbf{x} \mid \mathbf{x} \in \mathbb{R}^d \}$ forces the connectivity of $\mathtt{Ind}(G \square K_q)$ to be at most $N-3=(d+1)(q-1)-2$.  But since it is $(N-2)$-connected we have a contradiction.
\end{proof}

\begin{corollary}\label{corollary:mainCor}
For every $\varepsilon >0$ there exists a constant $K_\varepsilon$ such that the following holds:
Let $G$ be a graph on $((d+1)(q-1)+1)(1+\varepsilon)$ vertices and maximal degree $\Delta$ (with $d$ and $\Delta$ are large enough depending on $\varepsilon$), and let $f$ be a continuous map from a simplex $\Sigma$ with the same vertex set as $G$ to $\mathbb{R}^d$. If
\[  q> K_\varepsilon \Delta \] 
then there is a $q$-coloring of $G$ satisfying
\[ \bigcap_{i=1}^{q} f( \textrm{simplex spanned by color $i$}) \neq \emptyset. \]
\end{corollary}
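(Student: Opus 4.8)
The plan is to assemble this corollary from the machinery already in place, essentially by feeding the output of Corollary~\ref{cor:DF2} into Theorem~\ref{theorem:mainEqui}. Fix $\varepsilon>0$ and let $K_\varepsilon$ be the constant produced by Theorem~\ref{thm:goodEpsilon} (equivalently, by Corollary~\ref{cor:DF2}). Given $d$ and $q$, write $N = (d+1)(q-1)+1$, so that $G$ has exactly $N(1+\varepsilon)$ vertices, which is the cardinality hypothesis of Corollary~\ref{cor:DF2}. Since $q$ is assumed to be a prime power and $q > K_\varepsilon\Delta$, Corollary~\ref{cor:DF2} tells us that $G\square K_q$ is $\mathtt{VD}_N$, provided $d$ (hence $N$) and $\Delta$ are large enough in terms of $\varepsilon$ — which is exactly the side condition recorded in the statement.

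The second and final step is to invoke Theorem~\ref{theorem:mainEqui} with this $N$, the simplex $\Sigma$ on the vertex set of $G$, and the given map $f\colon\Sigma\to\mathbb{R}^d$. The hypotheses of that theorem are: $q\ge 2$ a prime power, $d\ge 1$, $N=(d+1)(q-1)+1$, and $G\square K_q$ being $\mathtt{VD}_N$. All four hold — the prime power assumption on $q$ is inherited, the equality defining $N$ is how we set it up, and the $\mathtt{VD}_N$ property is the conclusion of the previous paragraph. Theorem~\ref{theorem:mainEqui} then delivers a $q$-coloring $C_1\cup\cdots\cup C_q = V(G)$ with $\bigcap_{i=1}^q f(\textrm{simplex spanned by }C_i)\neq\emptyset$, which is precisely the desired conclusion. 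So the proof is a two-line deduction, and in the paper it can be written as: \emph{Take $K_\varepsilon$ from Theorem~\ref{thm:goodEpsilon}; by Corollary~\ref{cor:DF2} the graph $G\square K_q$ is $\mathtt{VD}_N$ with $N=(d+1)(q-1)+1$; now apply Theorem~\ref{theorem:mainEqui}.}

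The only genuine subtlety — and the step I would scrutinize most — is the bookkeeping around the prime power hypothesis. Conjecture~\ref{conj:main} and the introductory statement of the corollary do not mention that $q$ must be a prime power, yet Theorem~\ref{theorem:mainEqui} requires it (the equivariant Borsuk--Ulam argument of Volovikov needs $q=p^k$). One should therefore either add "prime power" to the statement of Corollary~\ref{corollary:mainCor} to match Theorem~\ref{theorem:mainEqui}, or note that since prime powers have density-type gaps that shrink, a fixed-parameter statement with a slightly larger $K_\varepsilon$ can absorb rounding $q$ down to the nearest prime power at the cost of adjusting $\varepsilon$ — but that needs a sentence of justification and is the place where the clean two-line proof could break down. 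A secondary, purely cosmetic point is that Theorem~\ref{thm:goodEpsilon} is phrased with a parameter "$N$" whose role must be identified with $(d+1)(q-1)+1$ here; this is immediate but worth stating explicitly so the "$N$ large enough" and "$d$ large enough" conditions are visibly the same condition.
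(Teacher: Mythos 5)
There is a genuine gap, and you have in fact located it yourself: Corollary~\ref{corollary:mainCor} is stated for arbitrary $q$, not just prime powers, so your two-line deduction (Corollary~\ref{cor:DF2} plus Theorem~\ref{theorem:mainEqui}) only establishes the prime power case. That is exactly the first half of the paper's proof; the second half, which you leave as ``a sentence of justification,'' is a real argument. Worse, the direction you sketch --- rounding $q$ \emph{down} to the nearest prime power --- does not work: a $q_p$-coloring with $q_p<q$ gives you too few color classes, and splitting a class to manufacture $q$ classes shrinks the spanned simplices, so the common intersection point can be lost. The correct move is to round \emph{up}.

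The paper's argument runs as follows. By prime gap estimates (Bertrand's postulate suffices for a weak bound; Baker--Harman--Pintz for sharper ones), for $q$ large there is a prime $q_p$ with $q\le q_p< q(1+\varepsilon/4)$. The same graph $G$, viewed with parameter $q_p$, has $((d+1)(q_p-1)+1)(1+\varepsilon_p)$ vertices where $\varepsilon_p$ is defined by $((d+1)(q-1)+1)(1+\varepsilon)=((d+1)(q_p-1)+1)(1+\varepsilon_p)$; the bound $q_p\le q(1+\varepsilon/4)$ gives $\varepsilon_p\ge\varepsilon/16$. Using the monotonicity $K^p_{\varepsilon_1}\ge K^p_{\varepsilon_2}$ for $\varepsilon_1<\varepsilon_2$ (which the paper notes must be recorded), one sets $K_\varepsilon=\max(K^p_{\varepsilon/16},B_{\varepsilon/4}/\Delta_\varepsilon)$ and applies the prime power case with $q_p$ colors to obtain a proper $q_p$-coloring whose $q_p$ image simplices have a common point. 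Discarding $q_p-q$ of the classes only enlarges the intersection, and the now-uncolored vertices can be greedily reinserted into the $q$ remaining color classes while keeping the coloring proper, since $\Delta<q$; adding vertices to a class only enlarges the spanned simplex, so the common point survives. Without this reduction (or without adding ``$q$ a prime power'' to the statement, which would weaken the result), your proof is incomplete.
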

Bertrand's postulate states that there is a prime between $q$ and $2q.$
According to classical analytic number theory, there is a prime between $q$ and $q+q^\alpha$ for some $\alpha<1$ when $q$ large enough. A contemporary result is $\alpha=0.525$ \cite{baker2001}. In the proof it is only needed that for every $\delta>0,$ if $q$ is sufficiently large, there is a prime between $q$ and $q+\delta q$. 

\begin{proof}
The proof is in two steps. The first step is to prove it for $q$ a prime power, the second step is to prove it for general $q$ using the prime power case and estimates for the density of primes.

According to Corollary~\ref{cor:DF2} there is a constant $K^p_\varepsilon$ such that $G\square K_{q}$ is $\mathtt{VD}_{(d+1)(q-1)+1}$
if $q> K^p_\varepsilon \Delta.$ By Theorem 3.3 we see that there is a $q$-coloring with the intersection of the images of monochromatic simplices non-empty if $q$ is a prime power.

An important easy property of the numbers $K^p_\varepsilon$ needed in the following argument, which has not been spelled out explicitly before, is that if $\varepsilon_1<\varepsilon_2$ then $K^p_{\varepsilon_1}\ge K^p_{\varepsilon_2}$. To ensure that all involved numbers are integers there is a lower bound $\Delta_\varepsilon$ for $\Delta$.

The next step is to construct $K_\varepsilon$ that works for arbitrary $q$. It will be proved that $K_\varepsilon=\max(K^p_{\varepsilon/16},B_{\varepsilon/4}/\Delta_\varepsilon)$ works.

For every $\delta>0$ there is an integer $B_\delta$ so that if $q\ge B_\delta$ then there is a prime $q_p$ so that $q\le q_p<q(1+\delta)$.

Assume that $q\ge \max(K^p_{\varepsilon/16},B_{\varepsilon/4}/\Delta_\varepsilon)\Delta$. Let $q_p\ge q$ be the prime power closest to $q$. Now $q_p$ is bounded above by $q(1+\varepsilon/4)$. 

It is possible to find $\varepsilon_p$ so that
\[
((d+1)(q-1)+1)(1+\varepsilon)=((d+1)(q_p-1)+1)(1+\varepsilon_p)
\]
and as $q_p$ is bounded above by $q(1+\varepsilon/4)$ a straightforward calculation shows that $\varepsilon_p$ is bounded below by $\varepsilon/16$.

Now there is a $q_p$ coloring of $G$ that gives a non-empty intersection by the prime power case. Only intersecting $q$ of the color classes also give a non-empty intersection. One can extend the partial coloring obtained by the $q$ picked classes into a complete coloring only using $q$ colors as the maximum degree of a vertex is less than $q$, this is true as $K_\varepsilon$ can be assumed to be greater than $1$. This new coloring also give a non-empty intersection as the intersection only grows by adding vertices to the color classes.
\end{proof}

\section*{Acknowledgement}\label{sec:acknowledgement}

Alexander Engstr\"om was a Miller Research Fellow at UC Berkeley, and gratefully acknowledges support from the Miller Institute for Basic Research in Science. Patrik Nor\'en gratefully acknowledges support from the Wallenberg foundation.

\end{document}